\newcommand{\comment}[1]{}
\newtheorem{result}{Main Result}
\newtheorem{theorem}{Theorem}
\newtheorem{proposition}{Proposition}
\newtheorem{definition}{Definition}
\newtheorem{thm}{Theorem}[section]
\newtheorem{lemma}{Lemma}
\newtheorem{example}[thm]{Example}
\theoremstyle{remark}
\providecommand*{\propertyautorefname}{Property}
\let\oldmarginpar\marginpar
\renewcommand\marginpar[1]{\oldmarginpar[\raggedleft\footnotesize #1]%
{\raggedright\footnotesize #1}}
\begin{document}
\begin{frontmatter}

\date{\today}

\title{Intrinsic Stability: Global Stability of Dynamical Networks and Switched Systems Resilient to any Type of Time-Delays}
\author[david]{David Reber}
\author[ben]{Benjamin Webb}
\address[david]{Department of Mathematics, Brigham Young University, Provo, UT 84602, USA, davidpreber@mathematics.byu.edu}
\address[ben]{Department of Mathematics, Brigham Young University, Provo, UT 84602, USA, bwebb@mathematics.byu.edu}

%\subjclass[2010]{Primary: 14N35, 53D45, Secondary: 32S05, 37K10, 37K20, 35Q53}
% INTRO start

\begin{abstract}
In real-world networks the interactions between network elements are inherently time-delayed. These time-delays can not only slow the network but can have a destabilizing effect on the network's dynamics leading to poor performance. The same is true in computational networks used for machine learning etc. where time-delays increase the network's memory but can degrade the network's ability to be trained. However, not all networks can be destabilized by time-delays. Previously, it has been shown that if a network or high-dimensional dynamical system is intrinsically stabile, which is a stronger form of the standard notion of global stability, then it maintains its stability when constant time-delays are introduced into the system. Here we show that intrinsically stable systems, including intrinsically stable networks and a broad class of \emph{switched systems}, i.e. systems whose mapping is time-dependent, remain stable in the presence of any type of time-varying time-delays whether these delays are periodic, stochastic, or otherwise. We apply these results to a number of well-studied systems to demonstrate that the notion of intrinsic stability is both computationally inexpensive, relative to other methods, and can be used to improve on some of the best known stability results. We also show that the asymptotic state of an intrinsically stable switched system is exponentially independent of the system's initial conditions.
\end{abstract}

\begin{keyword}
dynamical networks, time-varying time-delays, neural networks, switched systems
\end{keyword}

\end{frontmatter}

\section{Introduction}

The study of networks deals with understanding the properties of systems of interacting elements. In the social sciences these elements are typically individuals whose social interactions create networks such as Facebook and Twitter. In the biological sciences networks range from metabolic networks of single-cell organisms to the neuronal networks of the brain to the larger physiological network of the organs within the body. Networks such as citation networks and other well-studied networks such as the World Wide Web belong to what are referred to as information networks. In the technological sciences examples of networks include the internet, power grids, and transportation networks. (For an overview of these different types of networks see \cite{networks}.)

These real-world networks are dynamic in that both their \emph{topology}, which is the network's structure of interactions, and the state of the network are time dependent. Here our focus is on the changing state of the network, which is often referred to as the \emph{dynamics on} the network. The \emph{state} of the network is the collective states of the network elements. The fundamental concept in a dynamical network is that the state of a given element depends on the dynamics of its \emph{neighbors}, which are the other network elements that directly interact with this element.

Here we refer to the emergent behavior of these interacting elements, which is the changing state of these network elements, as the \emph{network's dynamics}. The network's dynamics can be periodic, as is found in many biological networks \cite{PeriodicBiologyDelays}, synchronizing, which is the desired condition for transmitting power over large distance in power grids \cite{SyncPowerGrid}, and stable or multistable dynamics such as is found in gene-regulatory networks \cite{MultistableGenes}, etc.

In real-world networks the interactions between network elements are inherently time-delayed. This comes from the fact that network elements are spatially separated, that information and other quantities can only be processed and transmitted at finite speeds, and that these quantities can be slowed by network traffic \cite{IntroPaper}. These time-delays not only slow the network, leading to poor performance, but can have a destabilizing effect on the network's dynamics, which can lead to network failure \cite{destabilizing1,destabilizing2}. The same is true in computational networks used for machine learning etc. where time-delays can be used to increase the network's ability to detect long term temporal dependencies \cite{memory} but at the cost of potentially degrading the ability to train the network.

Not all networks can be destabilized by time-delays. In \cite{BunWebb0} the notion of intrinsic stability was introduced, which is a stronger form of the standard notion of \emph{stability}, i.e. a system in which there is a globally attracting fixed point (see \cite{BunWebb2} for more details). If a network is intrinsically stable the authors showed that \emph{constant-type time-delays}, which are delays that do not vary in time, have no effect on the network's stability. As was shown in \cite{BunWebb0} an advantage of the intrinsic stability method over other methods such as Lyapunov-type methods, Linear Matrix Inequalities (LMI), and Semi-Definite Programming (SPD) is that determining whether a network is intrinsically stable can be done with respect to the lower-dimensional undelayed network and does not require the creation of special functions or the use of interior point methods, etc. What is required is finding the spectral radius of the network's Lipschitz matrix, which for large network's can be done efficiently by use of the power method \cite{powermethod}.

In many situations, however, the delays networks or, or more generally high-dimensional systems, experience are not constant-type time-delays. Time delays can be periodic, such as the daily/annual cycles in population models \cite{PeriodicPopulation}, or even stochastic as in traffic models \cite{StochasticTrains}. It is worth noting that these time-varying time-delays are more complicated than constant-type time delays and as such the theory of systems with time-varying time-delays is less developed than the theory of systems with constant time-delays, which in turn is less developed than the theory of systems without delays.

The main goal of this paper is to further develop the theory describing the stability of dynamical systems that experience time delays. Here we show that intrinsically stable systems, including intrinsically stable networks and a broad class of \emph{switched systems}, i.e. systems whose mapping is time-dependent, remain stable in the presence of any time-varying time-delays whether these delays are periodic, stochastic, or otherwise (see Main Results \ref{TimeVaryingIntrinsic} and \ref{RIIntrinsic}). We apply these results to a number of well-studied systems to demonstrate that the notion of intrinsic stability is both computationally inexpensive, relative to other methods, and can be used to improve on some of the best known results. (See for instance Example \ref{IntStable4.1}, compared to results found in \cite{dependent1,dependent11,dependent12,dependent13,dependent14}.) We also show that the asymptotic state of intrinsically stable switched systems is independent of the system's initial conditions (see Main Result \ref{LimitOrbits}). This allows us to show that the globally attracting state of any intrinsically stable network and any time-delayed version of the network are identical (see Proposition \ref{FixedPoint}).

The main results of this paper are demonstrated using Cohen-Grossberg Neural (CGN) Networks, which are dynamical networks whose stability is often studied in the presence of constant-type and time-varying  time-delays \cite{neural3,neural5}. It is worth noting that the results of this paper justify the modeling of dynamical networks and switched systems without formally including delays in the model if it is known that the system is intrinsically stable. The reason is that although delays do change the specific details the system's dynamics, if the system is intrinsically stable it will have the same qualitative dynamics and same asymptotic state whether or not its delays are included (see Main Results \ref{LimitOrbits}, \ref{TimeVaryingIntrinsic}, and \ref{RIIntrinsic})). The advantage is that determining or correctly anticipating what delays a network will experience can be quite complicated for most any real-world system. Hence, this theory also has implications to system design as an intrinsically stable network will not be destabilized by any type of unexpected delays.

As this theory is in many ways different from the standard theory of stable dynamical systems, much effort has gone into, first, making this theory understandable and, second, emphasizing the computational and algorithmic advantages of this method. To this end examples are given throughout this paper describing each result, its usefulness, and how these results can be implemented in an efficient manner.

This paper is organized as follows. In Section \ref{sec:2} we define a dynamical network and the notion of intrinsic stability. In Section \ref{sec:3} we introduce networks with constant time-delays and show that these delayed systems have the same fixed points as their undelayed versions. In Section \ref{sec:4} we define switched networks, i.e. switched systems, and give our Main Results \ref{LimitOrbits} and \ref{TimeVaryingIntrinsic}, which show that intrinsically stable switched systems have the same asymptotic state irrespective of initial condition if intrinsically stable and intrinsically stable networks with time-varying time-delays have a globally attracting fixed point, respectively. We then apply this theory to linear systems with distinct delayed and undelayed interactions, which allow us to compare our results to some of the most well-studied time-delayed systems. In Section \ref{sec:5} we extend our results to a more general class of switched networks and similarly compare our results to a number of well-studied switching systems. In Section \ref{sec:6} we introduce some analytical and computational considerations related to determining whether a network is intrinsically stable. Section \ref{sec:8} contains some remarks about future work and applications of this theory. The Appendix contains the proofs of our results.

\section{Dynamical Networks}\label{sec:2}
A network is composed of a set of \emph{elements}, which are the individual units that make up the network and a collection of interactions between these elements. An \emph{interaction} between two network elements can be thought of as an element's ability to influence the behavior of the other network element. More generally, there is a \emph{directed interaction} from the $j^{th}$ to the $i^{th}$ elements of a network if the $j^{th}$ network element can influence the state of the $i^{th}$ network element (where there may be no influence of the $i^{th}$ network element on the $j^{th}$). The dynamics of a network can be formalized as follows:

\begin{definition}{\textbf{\emph{(Dynamical Network)}}}
Let $(X_i,d_i)$ be a complete metric space for $1\le i\le n$.
Let $(X,d_{max})$ be the complete metric space formed by endowing the product space $X=\bigoplus_{i=1}^n X_i$ with the metric
\[
d_{max}(\mathbf{x},\mathbf{y}) = \max_i d_i(x_i,y_i) \quad \text{ where } \mathbf{x},\mathbf{y} \in X \text{ and } \quad x_i,y_i \in X_i.
\]
Let $F:X \to X$  be a continuous map, with $i^{th}$ component function $F_i:X\to X_i$ given by
\[
F_i = F_i(x_1,x_2,\hdots,x_n) \quad \text{in which } \quad x_j \in X_j \quad \text{for } j=1,2,\hdots,n
\]
where it is understood that there may be no actual dependance of $F_i$ on $x_j$.
The dynamical system $(F,X)$ generated by iterating the function $F$ on $X$ is called a \emph{dynamical network}.
If an initial condition $\mathbf{x}^0\in X$ is given, we define the $k^{th}$ \emph{iterate} of $\mathbf{x}^0$ as $\mathbf{x}^k=F^k(\mathbf{x}^0)$, with orbit $\{F^k(\mathbf{x}^0)\}_{k=0}^\infty=\{\mathbf{x}^0,\mathbf{x}^1,\mathbf{x}^2,\hdots\}$ in which $\mathbf{x}^k$ is the state of the network at time $k \ge 0$.
\end{definition}

The component function $F_i = F_i(x_1,x_2,\hdots,x_n)$ describes the dynamics and interactions with the $i^{th}$ element of the network, where there is a directed interaction between the $i^{th}$ and $j^{th}$ elements if $F_i$ actually depends on $x_j$. The function $F:X\rightarrow X$ describes all interactions of the network $(F,X)$.
For the initial condition $\mathbf{x}^0\in X$ the state of the $i^{th}$ element at time $k\ge 0$ is $x^k_i=(F^k(\mathbf{x}^0))_i\in X_i$ so that $X_i$ is the element's state space. The state space $X=\bigoplus_{i=1}^n X_i$ is the collective state space of all network elements.

While the definition of a dynamical network allows for the function $F$ to be defined on general products of complete metric spaces, for the sake of intuition and direct applications of the theory we develop in this paper, our examples will focus on dynamical networks in which $X=\mathbb{R}^n$ with the infinity norm $\left\Vert\mathbf{x}\right\Vert_\infty=\max_i\left\lvert x_i\right\rvert$.
To give a concrete example of a dynamical network and to illustrate results throughout this paper, we will use Cohen-Grossberg neural (CGN) networks.

\begin{example}{(Cohen-Grossberg Neural Networks)}\label{DynamicalNetworkExample}
For $W\in \mathbb{R}^{n\times n}$, $\sigma : \mathbb{R}\to \mathbb{R}$, and $c_i,\epsilon\in\mathbb{R}$ for $1\le i \le n$ let $(C,\mathbb{R}^n)$ be the dynamical network with components
\begin{equation}
C_i(\mathbf{x}) = (1-\epsilon)x_i+\sum_{j=1}^n W_{ij}\sigma(x_j)+c_i
\quad 1\le i\le n,
\end{equation}
which is a special case of a Cohen-Grossberg neural network in discrete-time \cite{neural1}.
The function $\sigma$ is assumed to be bounded, differentiable, and monotonically increasing, with Lipschitz constant $K$, that is, \[\left\lvert \sigma(x)-\sigma(y) \right\rvert\le K\left\lvert x-y\right\rvert\]
for all $x,y\in\mathbb{R}$.
\end{example}

In a Cohen-Grossberg neural network the variable $x_i$ represents the \emph{activation} of the $i^{th}$ neuron.
The function $\sigma$ is a bounded monotonically increasing function, which describes the $i^{th}$ neuron's response to inputs.
The matrix $W$ gives the interaction strengths between each pair of neurons and describes how the neurons are connected within the network.
The constants $c_i$ indicate constant inputs from outside the network.

In a \emph{globally stable} dynamical network $(F,X)$, the state of the network tends toward an equilibrium irrespective of its initial condition.
That is, the network has a \emph{globally attracting fixed point} $\mathbf{y} \in X$ such that for any $\mathbf{x} \in X$, $F^k(\mathbf{x}) \to \mathbf{y}$ as $k \to \infty$.

Global stability is observed in a number of important systems including neural networks \cite{neural1,neural2,neural3,neural4,neural5}, epidemic models \cite{epidemic}, and the study of congestion in computer networks \cite{ComputerNetworks}.
In such systems the globally attracting equilibrium is typically a state in which the network can carry out a specific task.
Whether or not this equilibrium stays stable depends on a number of factors including external influences but also internal processes such as the network's own growth, both of which can destabilize the network.

To give a sufficient condition under which a network $(F,X)$ is stable, we define a \emph{Lipschitz matrix} (called a \emph{stability matrix} in \cite{BunWebb1}).

\begin{definition}{\textbf{\emph{(Lipschitz Matrix)}}}\label{Lipschitz Matrix}
For $F:X \to X$ suppose there are finite constants $a_{ij} \ge 0$ such that
\[
d_i(F_i(\mathbf{x}),F_i(\mathbf{y})) \le \sum_{j=1}^{n} a_{ij} d_j(x_j,y_j) \quad \text{for all } \mathbf{x},\mathbf{y} \in X.
\]
Then we call $A=[a_{ij}] \in \mathbb{R}^{n\times n}$ a \emph{Lipschitz matrix} of the dynamical network $(F,X)$.
\end{definition}

It is worth noting that if $A$ is a Lipschitz matrix of a dynamical network then any matrix $B\preceq A$, where $\preceq$ denotes the element-wise inequality, is also a Lipschitz matrix of the network.
However, if the function $F:X\to X$ is piecewise differentiable and each $X_i \subseteq \mathbb{R}$ then the matrix $A \in \mathbb{R}^{nxn}$ given by
\begin{equation}\label{StabilityMatrix}
a_{ij} = \sup_{x \in X} \left\lvert\frac{\partial F_i}{\partial x_j}(\mathbf{x})\right\rvert
\end{equation}
is the Lipschitz matrix of minimal spectral radius of $(F,X)$ (see \cite{BunWebb1}).
From a computational point of view, the Lipschitz matrix $A=[a_{ij}]$ of $(F,X)$ can be more straightforward to find by use of Equation (\ref{StabilityMatrix}) if the function $F:X\to X$ is differentiable, compared to the more general formulation in Definition \ref{Lipschitz Matrix}.

Using Equation (\ref{StabilityMatrix}) it follows that the Lipschitz matrix $A$ of the Cohen-Grossberg neural network from Example \ref{DynamicalNetworkExample} is given by
\begin{equation}\label{CGN_A}
a_{ij} =
\begin{cases}
\text{ } \left\lvert1-\epsilon\right\rvert+K \left\lvert W_{ii}\right\rvert & \text{if } i=j \\
\text{ } K\left\lvert W_{ij}\right\rvert & \text{otherwise.}
\end{cases}
\end{equation}

It is straightforward to verify that a Lipschitz matrix exists for a dynamical network $(F,X)$ if and only if the mapping $F$ is Lipschitz continuous.
The idea is to use the Lipschitz matrix to simplify the stability analysis of nonlinear networks, using the following theorem of \cite{BunWebb1}.
Here
\[\rho(A)=\max_{\lambda \in \sigma(A)}|\lambda|\]
denotes the spectral radius of a matrix $A$, where $\sigma(A)$ are the eigenvalues of $A$.

\begin{theorem}{\textbf{\emph{(Network Stability)}}}\label{NetworkStability}
Let $A$ be a Lipschitz matrix of a dynamical network $(F,X)$.
If $\rho(A)<1$, then $(F,X)$ is stable.
\end{theorem}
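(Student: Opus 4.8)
The plan is to exhibit a metric on $X$, equivalent to $d_{max}$, with respect to which $F$ becomes a contraction, and then to invoke the Banach fixed-point theorem. The first step is purely linear-algebraic: I want a strictly positive vector $w=(w_1,\dots,w_n)^\top$ with $Aw\prec w$ entrywise. Since $A$ has nonnegative entries and $\rho(A)<1$, the Neumann series $(I-A)^{-1}=\sum_{k=0}^\infty A^k$ converges and has nonnegative entries, so $w:=(I-A)^{-1}\mathbf{1}$ (with $\mathbf{1}$ the all-ones vector) is well defined, satisfies $w\succeq\mathbf{1}\succ 0$, and obeys $Aw=w-\mathbf{1}\prec w$. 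In particular $c:=\max_i (Aw)_i/w_i<1$. This choice sidesteps any irreducibility hypothesis on $A$ that a Perron--Frobenius argument would otherwise require.

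Next I would define $d_w(\mathbf{x},\mathbf{y}):=\max_i w_i^{-1}d_i(x_i,y_i)$. Because there are finitely many indices and each $w_i$ lies in the fixed interval $[\min_j w_j,\max_j w_j]\subset(0,\infty)$, the metric $d_w$ is bi-Lipschitz equivalent to $d_{max}$; hence $(X,d_w)$ is again a complete metric space, since Cauchy sequences and their limits are unchanged under an equivalent metric. The one genuine computation is the contraction estimate: for any $\mathbf{x},\mathbf{y}\in X$,
\[
d_w(F(\mathbf{x}),F(\mathbf{y}))=\max_i\frac{d_i(F_i(\mathbf{x}),F_i(\mathbf{y}))}{w_i}\le\max_i\frac{1}{w_i}\sum_{j=1}^n a_{ij}\,d_j(x_j,y_j)\le\Big(\max_i\frac{(Aw)_i}{w_i}\Big)d_w(\mathbf{x},\mathbf{y})=c\,d_w(\mathbf{x},\mathbf{y}),
\]
using the defining inequality of the Lipschitz matrix together with the bound $d_j(x_j,y_j)\le w_j\,d_w(\mathbf{x},\mathbf{y})$.

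Finally, since $F:(X,d_w)\to(X,d_w)$ is a contraction with constant $c<1$ on a complete metric space, the Banach fixed-point theorem yields a unique fixed point $\mathbf{y}\in X$ with $F^k(\mathbf{x})\to\mathbf{y}$ (geometrically, at rate $c^k$) for every $\mathbf{x}\in X$; equivalence of $d_w$ and $d_{max}$ then transfers this convergence back to $d_{max}$, so $(F,X)$ is stable. I expect the only delicate point to be the construction of $w$ and the verification that it is strictly positive with $Aw\prec w$ even when $A$ is reducible — handled above by the Neumann-series choice — while everything downstream (equivalence of metrics, the contraction bound, and the appeal to Banach) is routine.
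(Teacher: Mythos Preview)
Your argument is correct and self-contained. One small remark: the paper does not actually prove Theorem~\ref{NetworkStability}; it is quoted from \cite{BunWebb1}, so there is no in-paper proof to compare against line by line. That said, the technique the paper does deploy in the Appendix (for Main Result~\ref{LimitOrbits}) is closely related but organized differently: there the authors track the vector $\big(d_i(\mathscr{F}_i^k(\mathbf{x}),\mathscr{F}_i^k(\mathbf{y}))\big)_i$ and bound it entrywise by $A^{(k)}\cdots A^{(1)}$ applied to the initial distance vector, then take the $\ell_\infty$-norm and invoke the (joint) spectral radius. Specialized to a single matrix $A$ with $\rho(A)<1$ this gives $d_{max}(F^k(\mathbf{x}),F^k(\mathbf{y}))\to 0$, but the existence of a fixed point still has to be supplied separately (e.g.\ by a Cauchy-sequence argument on a single orbit).

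Your route is the classical weighted-norm trick: you absorb the linear algebra into the choice of metric $d_w$, so that $F$ becomes an honest contraction and Banach delivers existence, uniqueness, and global convergence in one stroke. The Neumann-series construction $w=(I-A)^{-1}\mathbf{1}$ is exactly the right way to avoid any irreducibility hypothesis on $A$, and the verification $Aw=w-\mathbf{1}\prec w$ is clean. The two approaches are essentially equivalent in strength; yours is tidier for the single-map case because it handles existence of the attractor automatically, while the paper's entrywise-vector formulation is what generalizes naturally to the switched setting where no single contracting metric need exist.
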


It is worth noting that if we use the Lipschitz matrix $A$ to define the dynamical network $(G,X)$ by $G(\mathbf{x})=A\mathbf{x}$ then $(G,X)$ is stable if and only if $\rho(A)<1$.
Thus, a Lipschitz matrix of a dynamical network $(F,X)$ can be thought of as the worst-case linear approximation to $F$.
If this approximation has a globally attracting fixed point, then the original dynamical network $(F,X)$ must also be stable.
Note, however, that the condition that $\rho(A)<1$ is sufficient but not necessary for $(F,X)$ to be stable.
In fact, this stronger condition implies much more than network stability, so following the convention introduced in \cite{BunWebb1} we assign it the name of \emph{intrinsic stability}.

\begin{definition}{\textbf{\emph{(Intrinsic Stability)}}}
Let $A \in \mathbb{R}^{nxn}$ be a Lipschitz matrix of a dynamical network $(F,X)$.
If $\rho(A)<1$, then we say $(F,X)$ is \emph{intrinsically stable}.
\end{definition}

The Cohen-Grossberg neural network $(C,\mathbb{R}^{n\times n})$ has the stability matrix $A=\left\lvert1-\epsilon\right\rvert I+K\left\lvert W \right\rvert$ given by Equation (\ref{CGN_A}) with spectral radius
\[
\rho(A) = \left\lvert1-\epsilon \right\rvert+K\rho(\left\lvert W\right\rvert)
\]
Here, $\left\lvert W\right\rvert\in\mathbb{R}^{n\times n}$ is the matrix $W$ in which we take the absolute value of each entry.
Thus, $(C,\mathbb{R}^{n\times n})$ is intrinsically stable if $\left\lvert1-\epsilon\right\rvert+K\rho(\left\lvert W\right\rvert)<1$.

\section{Constant-Time-Delayed Dynamical Networks}\label{sec:3}
As mentioned in the introduction, the dynamics of most real networks are \emph{time-delayed}.
That is, an interaction between two network elements will typically not happen instantaneously but will be delayed due to either the physical separation of these elements, their finite processing speeds, or be delayed due to other factors.
We formalize this phenomenon by introducing a \emph{delay distribution matrix} $D=[d_{ij}]$ into a dynamical network $(F,X)$, where each $d_{ij}$ is a nonnegative integer denoting the constant number of discrete time-steps by which the interaction from the $j^{th}$ network element to the $i^{th}$ network element is delayed.

\begin{definition}{\textbf{\emph{(Constant Time-Delayed Dynamical Network)}}} \label{ConstantDelayNetwork}

\noindent Let $(F,X)$ be a dynamical network and $D=[d_{ij}]\in\mathbb{N}^{n\times n}$ a delay distribution matrix with $\max_{i,j}d_{ij}\le L$, a bound on the delay length. Let $X_L$, the \emph{extension of} $X$ to delay-space, be defined as
\[
X_L = \bigoplus_{\ell=0}^L \bigoplus_{i=1}^n X_{i,\ell}
\quad \text{ where } \quad X_{i,\ell}=X_i \quad \text{ for } \quad 1 \le i \le n \quad \text{ and } \quad 0 \le \ell \le L.
\]
Componentwise, define $F_D:X_L \to X_L$ by
\begin{equation}\label{identity}
(F_D)_{i,\ell+1}:X_{i,\ell} \to X_{i,\ell+1} \quad \text{ given by the identity map } \quad (F_D)_{i,\ell+1}(x_{i,\ell}) = x_{i,\ell}\quad \text{ for } \quad 0 \le \ell \le L-1
\end{equation}
and
\begin{equation}\label{same-but-later}
(F_D)_{i,0}:\bigoplus_{j=1}^n X_{j,d_{ij}} \to X_{i,0} \quad \text{ given by } \quad (F_D)_{i,0} = F_i(x_{1,d_{i1}},x_{2,d_{i2}},\hdots,x_{n,d_{in}})
\end{equation}
where $F_i:X\to X_i$ is the $i^{th}$ component function of $F$ for $i=1,2,\hdots,n$.
Then $(F_D,X_L)$ is the \emph{delayed version} of $F$ corresponding to the fixed-delay distribution $D$ with \emph{delay bound} $L$.
\end{definition}

We order the component spaces of $X_L$ in the following way.
If $\mathbf{x}\in X_L$ then
\[
\mathbf{x}=[x_{1,0},x_{2,0},\hdots,x_{n,0},x_{1,1},x_{2,1},\hdots,x_{n,L}]^T
\]
where $x_{i,\ell}\in X_{i,\ell}$ for $i=1,2,\hdots,n$ and $\ell=0,1,\hdots,L$.

The formalization in Definition \ref{ConstantDelayNetwork} captures the idea of adding a time-delay of length $d_{ij}\leq L$ to an interaction:
Each $X_i$ is effectively copied $L$ times, and past states of the $i^{th}$ element are passed down this chain by the identity component functions $(F_D)_{i,\ell+1}$ for $0 \le \ell \le L-1$ in Equation (\ref{identity}). When a state of the $i^{th}$ element has been passed through the chain $d_{ij}$ times over $d_{ij}$ time-steps it then influences the $i^{th}$ network element, as described by the entry-wise substitutions of $x_{j,d_{ij}}$ for $x_j$ in $(F_D)_{i,0}$ in Equation (\ref{same-but-later}).

\begin{figure}
\begin{center}
    \begin{overpic}[scale=.33]{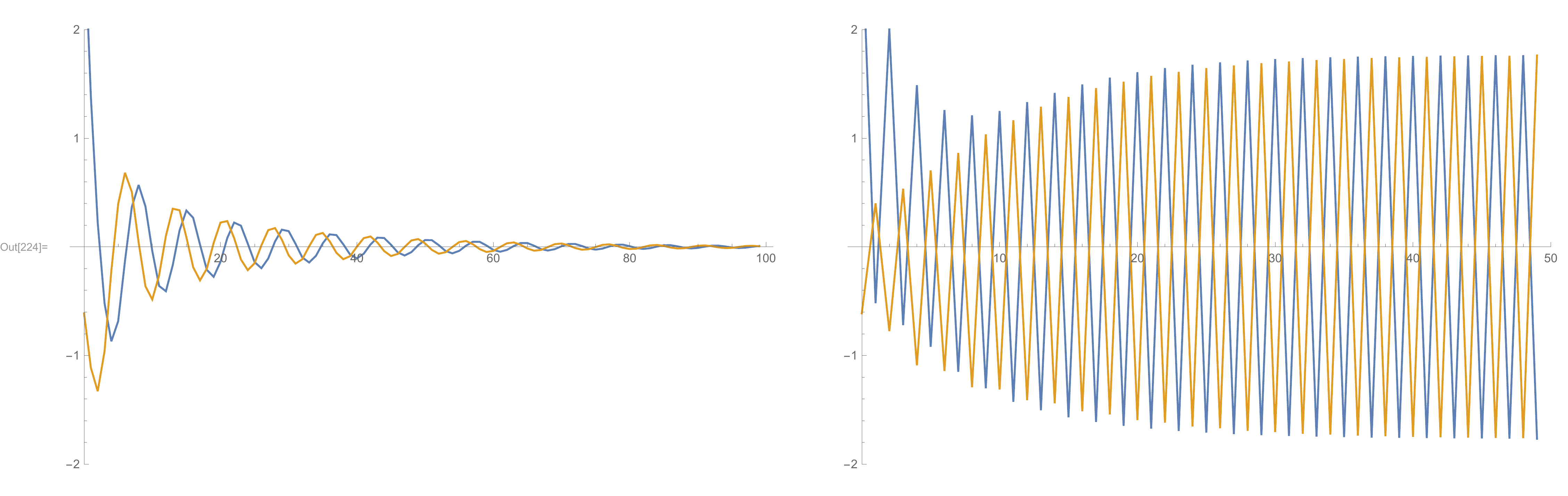}
    \put(11,-1){Stable Network $(C,X)$}
    \put(56,-1){Constant Time-Delayed Network $(C_D,X_3)$}
    \end{overpic}
\end{center}
  \title{}
  \caption{Left: The stable dynamics of the two-neuron Cohen-Grossberg network $(C,X)$ from Example \ref{ConstantDelayExample} is shown. Right: The unstable dynamics of the constant time-delayed version of this network $(C_D,X_3)$ is shown with the delay distribution given by the matrix $D$ in Equation \eqref{Dmatrix}.}\label{fig:1}
\end{figure}

\begin{example}\label{ConstantDelayExample}
Consider a simple 2-neuron version of a Cohen-Grossberg neural network $(C,X)$ given by
\begin{equation}\label{short_delay_CGN}
C \bigg(\begin{bmatrix}
x_1 \\
x_2
\end{bmatrix} \bigg) =
\begin{bmatrix}
    C_1(x_1,x_2) \\
    C_2(x_1,x_2)
\end{bmatrix} =
\begin{bmatrix}
    (1-\epsilon)x_1+W_{11}\phi(x_1)+W_{12}\phi(x_2)+c_1 \\
    (1-\epsilon)x_2+W_{21}\phi(x_1)+W_{22}\phi(x_2)+c_2
\end{bmatrix},
\end{equation}
in which $X = \mathbb{R}^2$. For the delay distribution $D$ given by
\begin{equation}\label{Dmatrix}
D=\begin{bmatrix}
1&2\\
1&3
\end{bmatrix},
\end{equation}
which has a maximum delay of $L=3$ the time-delayed network $(C_D,X_3)$ is given by
\[
C_D \left(\begin{bmatrix}
x_{1,0}\\
x_{2,0}\\
x_{1,1}\\
x_{2,1}\\
x_{1,2}\\
x_{2,2}\\
x_{1,3}\\
x_{2,3}
\end{bmatrix} \right) =
\begin{bmatrix}
    C_1(x_{1,1},x_{2,2}) \\
    C_2(x_{1,1},x_{2,3}) \\
    x_{1,1} \\
    x_{2,1} \\
    x_{1,2} \\
    x_{2,2} \\
    x_{1,3} \\
    x_{2,3}
\end{bmatrix} =
\begin{bmatrix}
    (1-\epsilon)x_{1,1}+W_{11}\phi(x_{1,1})+W_{12}\phi(x_{2,2})+c_1 \\
    (1-\epsilon)x_{2,3}+W_{21}\phi(x_{1,1})+W_{22}\phi(x_{2,3})+c_2 \\
    x_{1,0} \\
    x_{2,0} \\
    x_{1,1} \\
    x_{2,1} \\
    x_{1,2} \\
    x_{2,2}
\end{bmatrix}
\]
in which $X_3=\mathbb{R}^8$.
The time-delayed network $(C_D,X_3)$ is the same as the original network $(C,X)$ except that the state of $x_{1,0}$ gets passed through one identity mapping before it is input into $F_1$ and twice before it is input into $F_2$.
Similarly, $x_{2,0}$ gets passed through one identity mapping before it is input into $F_1$ and three identity mappings before it is input into $F_2$.

A natural question is to ask is whether constant time-delays affect the stability of a network.
We note that if \[
W=\begin{bmatrix}
0 & -\frac{3}{4} \\
\frac{3}{4} & 0
\end{bmatrix},
\]
$c_1=c_2=0$, $\sigma(x)=tanh(x)$, and $\epsilon=\frac{2}{5}$ then the dynamical network $(C,X)$ given in Equation (\ref{short_delay_CGN}) is stable as can be seen in Figure (\ref{fig:1}) (left).
However, the time-delayed version $(C_D,X_3)$ of this network is unstable as is shown in the same figure (right).
That is, the time-delays given by the delay distribution $D$ have a \emph{destabilizing effect} on this network.
\end{example}

An important fact about the network constructed in this example is that its spectral radius
\[
\rho(A)=\left\lvert 1-\epsilon\right\rvert+K\rho(\left\lvert W\right\rvert)=1.35>1.
\]
That is, although $(C,X)$ is stable it is not intrinsically stable.

In \cite{BunWebb1}, the authors demonstrate that intrinsically stable systems are resilient to the addition of constant time-delays, as is stated in the following theorem.

\begin{theorem}{\textbf{\emph{(Intrinsic Stability and Constant Delays)}}}\label{ConstantIntrinsic}
Let $(F,X)$ be a dynamical network and $D=[d_{ij}]$ a delay-distribution matrix.
Let $L$ satisfy $\max_{i,j}d_{ij}\le L$.
Then $(F,X)$ is intrinsically stable if and only if $(F_D,X_L)$ is intrinsically stable.
\end{theorem}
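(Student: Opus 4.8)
The plan is to reduce the claim to a comparison of the spectral radii of two nonnegative matrices attached to $F$ and $F_D$, and then to bridge those two ``$\rho<1$'' conditions at the level of the threshold $1$ rather than by equating spectral radii (which in general are \emph{not} equal). Since ``$(F,X)$ is intrinsically stable'' means exactly that $F$ admits some Lipschitz matrix of spectral radius less than $1$, and likewise for $(F_D,X_L)$, it suffices to transfer Lipschitz matrices back and forth between $F$ and $F_D$ while controlling spectral radii. The one external ingredient is the standard characterization: a nonnegative square matrix $M$ has $\rho(M)<1$ if and only if there is a strictly positive vector $w\succ 0$ with $Mw\prec w$ componentwise. (The ``only if'' follows from $w=(I-M)^{-1}\mathbf 1=\sum_{k\ge 0}M^k\mathbf 1\succeq\mathbf 1$, so $Mw=w-\mathbf 1\prec w$; the ``if'' from $M^kw\preceq\theta^kw\to 0$ with $\theta=\max_i(Mw)_i/w_i<1$, which forces $M^k\to 0$.)

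For the forward direction, start from a Lipschitz matrix $A=[a_{ij}]$ of $F$ with $\rho(A)<1$ and form the natural candidate Lipschitz matrix $A_D$ of $F_D$ on the $n(L+1)$-dimensional space $X_L$: the identity maps of Equation~(\ref{identity}) contribute a $1$ in the entry from coordinate $(i,\ell)$ to $(i,\ell+1)$, and the feedback maps of Equation~(\ref{same-but-later}) contribute $a_{ij}$ in the entry from $(j,d_{ij})$ to $(i,0)$, all other entries being $0$; a one-line check against Definition~\ref{Lipschitz Matrix} shows $A_D$ is indeed a Lipschitz matrix of $F_D$. To see $\rho(A_D)<1$, pick $w\succ 0$ with $Aw\prec w$, set $\theta=\max_i(Aw)_i/w_i<1$, and test the ansatz $\tilde w_{i,\ell}=\beta^{\ell}w_i$ with $\beta>1$ chosen small enough that $\beta^{L}\theta<1$ (if $L=0$ then $F_D=F$ and there is nothing to prove). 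The chain rows give $(A_D\tilde w)_{i,\ell+1}=\beta^{\ell}w_i<\beta^{\ell+1}w_i=\tilde w_{i,\ell+1}$, and the feedback rows give $(A_D\tilde w)_{i,0}=\sum_j a_{ij}\beta^{d_{ij}}w_j\le\beta^{L}(Aw)_i\le\beta^{L}\theta\,w_i<w_i=\tilde w_{i,0}$, so $A_D\tilde w\prec\tilde w$ and hence $\rho(A_D)<1$.

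For the reverse direction, let $B$ be \emph{any} Lipschitz matrix of $F_D$ with $\rho(B)<1$, and collapse the delay chains by setting $a_{ij}=\sum_{\ell=0}^{L}B_{(i,0),(j,\ell)}$. Embedding $X$ into $X_L$ as constant chains ($x_j\mapsto(x_j,\dots,x_j)$) and applying the Lipschitz bound for the component $(F_D)_{i,0}$ shows that $A=[a_{ij}]$ is a Lipschitz matrix of $F$. To get $\rho(A)<1$, choose $\tilde w\succ 0$ with $B\tilde w\prec\tilde w$; since each $(F_D)_{i,\ell+1}$ is the identity on a (nondegenerate) $X_i$, every Lipschitz matrix of $F_D$ must satisfy $B_{(i,\ell+1),(i,\ell)}\ge 1$, and feeding this into the $(i,\ell+1)$-rows of $B\tilde w\prec\tilde w$ forces $\tilde w_{i,0}<\tilde w_{i,1}<\dots<\tilde w_{i,L}$. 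Taking $w_j=\tilde w_{j,0}\succ 0$ and using $\tilde w_{j,0}\le\tilde w_{j,\ell}$ together with the $(i,0)$-row of $B\tilde w\prec\tilde w$ gives $(Aw)_i=\sum_{j,\ell}B_{(i,0),(j,\ell)}\tilde w_{j,0}\le\sum_{j,\ell}B_{(i,0),(j,\ell)}\tilde w_{j,\ell}<\tilde w_{i,0}=w_i$, i.e.\ $Aw\prec w$, so $\rho(A)<1$.

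The main obstacle is conceptual rather than computational: $\rho(A_D)$ and $\rho(A)$ genuinely differ in general (already for a single node with a unit self-delay one gets $\rho(A_D)=a_{11}^{1/2}$ while $\rho(A)=a_{11}$), so the two ``$\rho<1$'' statements cannot be matched via characteristic polynomials or a similarity transform; they have to be bridged through strictly positive subinvariant vectors, using the geometric profile $\beta^{\ell}w_i$ along the chains in one direction and the induced monotonicity $\tilde w_{i,0}<\dots<\tilde w_{i,L}$ in the other. The secondary delicate point is that in the reverse direction $B$ is an \emph{arbitrary} Lipschitz matrix of $F_D$, not the canonical one built from a Lipschitz matrix of $F$, which is exactly why the chain-monotonicity observation (forced by $B_{(i,\ell+1),(i,\ell)}\ge 1$) is needed to still extract a valid subinvariant vector for the collapsed matrix $A$.
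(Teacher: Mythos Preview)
Your argument is correct. Note, however, that the paper does not actually prove Theorem~\ref{ConstantIntrinsic}: it is quoted from \cite{BunWebb1}, and the closely related trichotomy $\rho(A)\le\rho(A_D)<1$ / $\rho(A_D)=1$ / $\rho(A)\ge\rho(A_D)>1$ appearing in the Appendix as Theorem~\ref{bunwebbtheorem} is likewise imported from \cite{BunWebb2} (Lemma~3.3 there) without proof. So there is no in-paper proof to match against.

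That said, your route is genuinely different from the one implicit in those references. The cited result is a spectral comparison obtained from the block structure of $A_D$ (in effect, relating the characteristic polynomials of $A$ and $A_D$), and it yields the finer quantitative ordering $\rho(A)\le\rho(A_D)$ in the subcritical regime. You bypass eigenvalues entirely and work with the Collatz--Wielandt characterization of $\rho<1$ for nonnegative matrices, transporting strictly positive subinvariant vectors in both directions: the geometric profile $\tilde w_{i,\ell}=\beta^{\ell}w_i$ going from $A$ to $A_D$, and the chain-monotonicity $\tilde w_{i,0}<\cdots<\tilde w_{i,L}$ (forced by $B_{(i,\ell+1),(i,\ell)}\ge 1$) going back. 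What you gain is a short, fully self-contained argument that also handles the reverse direction starting from an \emph{arbitrary} Lipschitz matrix $B$ of $F_D$, not just the canonical $A_D$; what you give up is the sharper inequality $\rho(A)\le\rho(A_D)$, which your method does not (and need not) recover. One small caveat: the step $B_{(i,\ell+1),(i,\ell)}\ge 1$ uses that each $X_i$ contains at least two points; this is harmless in practice (degenerate factors can be excised without loss), but it is an implicit hypothesis worth stating.
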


Beyond maintaining stability, we note that any fixed point(s) of an undelayed network $(F,X)$ will also be fixed point(s) of any delayed version $(F_D,X_L)$.
This is formalized in the following proposition, and proven in the Appendix.
Before stating this proposition, we require the following definition.

\begin{definition}{\textbf{\emph{(Extension of a Point to Delay-Space)}}}
Let $E_L(\mathbf{x})\in X_L$ be equal to $L+1$ copies of $\mathbf{x}\in X$ stacked into a single vector, namely
\[
E_L(\mathbf{x}) =
\begin{bmatrix}
\mathbf{x}_0 \\
\mathbf{x}_1 \\
\vdots \\
\mathbf{x}_L
\end{bmatrix} \quad \emph{ where } \quad \mathbf{x}_\ell = \mathbf{x}
\quad \emph{ for } \quad 0 \le \ell \le L.
\]
\end{definition}

\begin{proposition}{\textbf{\emph{(Fixed Points of Delayed Networks)}}}\label{FixedPoint}
Let $\mathbf{x}^*$ be a fixed point of a dynamical network $(F,X)$.
Then for all delay distributions $D$ with $\max_{ij}d_{ij}\le L$, $E_L(\mathbf{x}^*)$ is a fixed point of $(F_D,X_L)$.
\end{proposition}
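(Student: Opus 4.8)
The plan is to verify directly that $F_D\big(E_L(\mathbf{x}^*)\big) = E_L(\mathbf{x}^*)$ by checking the two families of component functions of $F_D$ from Definition~\ref{ConstantDelayNetwork} separately, using only the defining property $F_i(\mathbf{x}^*) = x_i^*$ of a fixed point together with the fact that every delay-copy of $\mathbf{x}^*$ appearing in $E_L(\mathbf{x}^*)$ is one and the same vector. First I would record the coordinates of the candidate point: by definition of $E_L$, one has $\big(E_L(\mathbf{x}^*)\big)_{i,\ell} = x_i^*$ for every $1 \le i \le n$ and every $0 \le \ell \le L$.

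Next I would dispatch the ``shift'' components. For $0 \le \ell \le L-1$, Equation~(\ref{identity}) says $(F_D)_{i,\ell+1}$ is the identity map $x_{i,\ell}\mapsto x_{i,\ell}$, so evaluating it at $E_L(\mathbf{x}^*)$ returns $x_i^*$, which is exactly the $(i,\ell+1)$ coordinate of $E_L(\mathbf{x}^*)$. Then I would handle the ``input'' components $(F_D)_{i,0}$: by Equation~(\ref{same-but-later}), $(F_D)_{i,0} = F_i(x_{1,d_{i1}},x_{2,d_{i2}},\dots,x_{n,d_{in}})$, and since $d_{ij}\le L$ for all $i,j$ each argument $x_{j,d_{ij}}$ is a genuine coordinate of a point of $X_L$ whose value at $E_L(\mathbf{x}^*)$ is $x_j^*$, independently of what the delay $d_{ij}$ happens to be. Hence $(F_D)_{i,0}\big(E_L(\mathbf{x}^*)\big) = F_i(x_1^*,\dots,x_n^*) = F_i(\mathbf{x}^*) = x_i^*$, the $(i,0)$ coordinate of $E_L(\mathbf{x}^*)$. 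Combining the two cases shows $F_D$ fixes $E_L(\mathbf{x}^*)$ coordinatewise, which is the assertion.

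There is no substantive obstacle in this argument; the only thing requiring care is the bookkeeping of the ordering of $X_L$ and of the entry-wise substitution $x_j \mapsto x_{j,d_{ij}}$ that defines $(F_D)_{i,0}$. The content of the proof is precisely that this substitution has no effect on $E_L(\mathbf{x}^*)$, because the blocks $\mathbf{x}_0,\dots,\mathbf{x}_L$ all coincide, so the delayed arguments collapse back to $\mathbf{x}^*$ and the fixed-point property of $F$ closes the argument. It is worth remarking that the same computation shows the embedding $\mathbf{x}\mapsto E_L(\mathbf{x})$ carries the fixed-point set of $F$ into that of $F_D$, although only the stated direction is needed here.
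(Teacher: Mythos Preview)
Your proposal is correct and follows essentially the same approach as the paper's proof: a direct componentwise verification that $F_D\big(E_L(\mathbf{x}^*)\big)=E_L(\mathbf{x}^*)$, handling the identity (shift) components via Equation~(\ref{identity}) and the input components $(F_D)_{i,0}$ via Equation~(\ref{same-but-later}) together with the fixed-point relation $F_i(\mathbf{x}^*)=x_i^*$. The paper presents the same computation as a single displayed column-vector equality rather than splitting into two cases, but the content is identical.
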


As an immediate consequence of Proposition \ref{FixedPoint} and Theorem \ref{ConstantIntrinsic}, if an undelayed network $(F,X)$ is intrinsically stable with a globally attracting fixed point $\mathbf{x}^*\in X$, then the delayed version $(F_D,X_L)$ will have the ``same'' globally attracting fixed point $E_L(\mathbf{x}^*)$, in that $\mathbf{x}^*$ is the restriction of $\mathbf{y}=E_L(\mathbf{x}^*)$ to the first $n$ component spaces of $X_L$. Thus, the asymptotic dynamics of an intrinsically stable network and any version of the network with constant time delays are essentially identical.

\begin{figure}
\begin{center}
    \begin{overpic}[scale=.33]{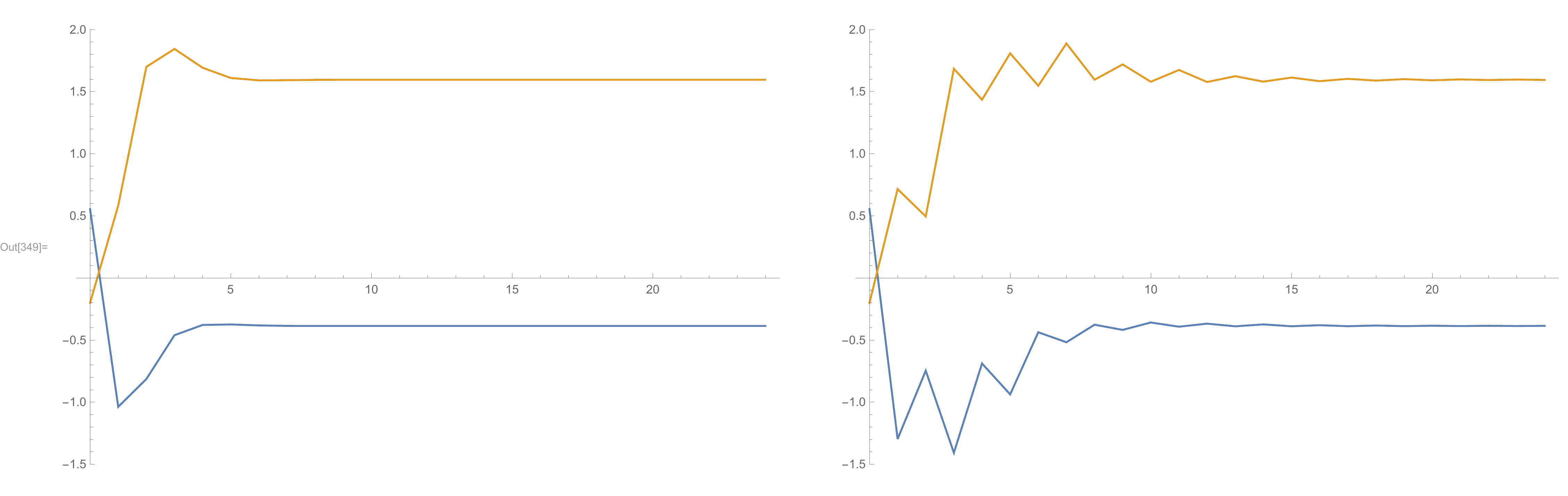}
    \put(8,-1){Intrinsically Stable Network $(C,X)$}
    \put(56,-1){Constant Time-Delayed Network $(C_D,X_3)$}
    \end{overpic}
\end{center}
  \caption{Left: The dynamics of the intrinsically stable network $(C,X)$ from Example \ref{ex:2} is shown. Right: The stable dynamics of the constant time-delayed version of this network $(C_D,X_3)$ is shown with the delay distribution given by the matrix $D$ in Equation \eqref{Dmatrix}. Both systems are attracted to the fixed point $\mathbf{x}^*=(-.386,1.595)$.}\label{fig:2}
\end{figure}

\begin{example}\label{ex:2}
Consider again the Cohen-Grossberg neural network $(C,X)$ and delay matrix $D$ given in Example \ref{ConstantDelayExample} where $W$ and $\sigma$ are as before but $\epsilon=\frac{4}{5}$, $c_1=-1$ and $c_2=1$. Since $\left\lvert1-\epsilon\right\rvert+\rho(\left\lvert W\right\rvert)=.95<1$ then $(C,X)$ is intrinsically stable with the globally attracting fixed point $\mathbf{x}^*=(-.386,1.595)$ as seen in Figure \ref{fig:2} (left).
Since $(C,X)$ is intrinsically stable then Theorem \ref{ConstantIntrinsic} together with Proposition \ref{FixedPoint} imply that not only is $(C_D,X_3)$ stable but its globally attracting fixed point is $E_3(\mathbf{x}^*)$.
This is shown in Figure \ref{fig:2} (right).
\end{example}

These results justify the modeling choice of ignoring constant time-delays when analyzing intrinsically stable real-world networks. However, these results do not account for the potential of time-dependent delays arising from external or stochastic influences, etc. The main results of this paper, presented in the next section, focus on strengthening the conclusion of Theorem \ref{ConstantIntrinsic}.

\section{Time-Varying Time-Delayed Dynamical Networks}\label{sec:4}
As the title of this section suggests, constant time-delays are not the only type of time delays that can occur in dynamical networks. More importantly, time-delays that vary with time occur in many real-world networks and in such systems are a significant source of instability \cite{IntroPaper, destabilizing1, destabilizing2}. It is worth noting that modeling such delays introduces even more complexity into models of dynamical networks that can already be quite complicated. This can hinder the tractability of analyzing such systems.
 % Consequently, many papers analyzing real-world networks assume constant time-delays, or ignore the occurrence of delays entirely. [refrences]

In order to define a network with time-varying time-delays, we first define the more general concept of a \emph{switched network}.

\begin{definition}{\textbf{\emph{(Switched Network)}}}
Let $M$ be a set of Lipschitz continuous mappings on $X$, such that for every $F\in M$, $(F,X)$ is a dynamical network.
Then we call $(M,X)$ a \emph{switched network} on $X$.
Given some sequence $\{F^{(k)}\}_{k=1}^\infty \subset M$, we say that $(\{F^{(k)}\}_{k=1}^\infty,X)$ is an instance of $(M,X)$, with orbits determined at time $k$ by the function
\[
\mathscr{F}^{k}(\mathbf{x}) = F^{(k)} \circ \hdots \circ F^{(2)} \circ F^{(1)}(\mathbf{x}) \quad \text{ for } \quad \mathbf{x}\in X.
\]
For the switched network $(M,X)$ we construct a \emph{Lipschitz set} $S$ consisting of a set of $n\times n$ matrices as follows:
For each $F\in M$, contribute exactly one Lipschitz matrix $A$ of $(F,X)$ to the set $S$.
\end{definition}

$(M,X)$ is an ensemble of dynamical systems formed by taking all possible sequences of mappings $\{F^{(k)}\}_{k=1}^\infty \subset M$.
For a switched network $(M,X)$, the set $S$ serves an analogous purpose to the Lipschitz matrix $A$ of a dynamical network $(F,X)$, as we soon demonstrate.
Before describing this we consider the following example.

\begin{example}\label{CounterSwitching}
Let $(P,\mathbb{R}^2)$ and $(Q,\mathbb{R}^2)$ be the simple dynamical networks given by
\[
P(\mathbf{x}) = \begin{bmatrix}
\epsilon & 1 \\
0 & \epsilon
\end{bmatrix} \mathbf{x}
\quad \text{ and } \quad
P(\mathbf{x}) = \begin{bmatrix}
\epsilon & 0 \\
1 & \epsilon
\end{bmatrix} \mathbf{x} \quad \text{ for small } \quad \epsilon<<1.
\]
For $M=\{P,Q\}$ let $\{F^{(k)}\}_{k=1}^\infty$ be the sequence that alternates between $P$ and $Q$, i.e. $F^{(k)}=P$ if $k$ is odd and $F^{(k)}=Q$ if $k$ is even.
Note that if we let $U=Q\circ P$ then
\[
U(\mathbf{x}) = \begin{bmatrix}
\epsilon^2 & \epsilon \\
\epsilon & 1+\epsilon^2
\end{bmatrix}\quad \text{ with } \quad
\rho(U)=\frac{1}{2}(1+2\epsilon^2+\sqrt{1+4\epsilon^2})>1.
\]
Since $\mathcal{F}^{(2k)}(\mathbf{x})=U\circ\hdots\circ U(\mathbf{x})$ for $\mathbf{x}\in\mathbb{R}^2$ then, as $U(\mathbf{x})$ is a linear system $\lim_{k\to\infty}\mathcal{F}^{(2k)}(\mathbf{x})=\infty$ for any $\mathbf{x}\ne0$.
This is despite the fact that both $(P,\mathbb{R}^2)$ and $(Q,\mathbb{R}^2)$ are intrinsically stable both having the globally attracting fixed point $\mathbf{0}$.
\end{example}

The issue is that although the spectral radius of both $P$ and $Q$ in this example are arbitrarily small, their joint spectral radius is not.

\begin{definition}{\textbf{\emph{(Joint Spectral Radius)}}}\label{JointSpectraldefinition}
Given some $\mathbf{z}^0\in \mathbb{R}^n$ and some set of matrices $S\subset \mathbb{R}^{n\times n}$, let $\mathbf{z}^k = A_k\hdots A_2A_1\mathbf{z}^0$ for some sequence $\{A_i\}_{i=1}^\infty\subset S$.
The \emph{joint spectral radius} $\overline{\rho}(S)$ of the set of matrices $S$ is the smallest value $\overline{\rho}\ge 0$ such that for every $\mathbf{z}^0\in \mathbb{R}^n$ there is some constant $C>0$ for which
\[
||{\mathbf{z}^k}|| \le C (\overline{\rho})^k.
\]
\end{definition}

It is known that $\{\mathbf{z}^k\}_{k=1}^\infty$ converges to the origin for all $\mathbf{z}^0\in \mathbb{R}^n$ if and only if $\overline{\rho}(S) < 1$ \cite{JointSpectral}.
This allows us to state the following result regarding the asymptotic behavior of a nonlinear switched network $(M,X)$ whose Lipschitz set $S$ satisfies $\overline{\rho}(S) < 1$.

\begin{result}{\textbf{\emph{(Independence of Initial Conditions for Switched Networks)}}}{\label{LimitOrbits}}
Let $S$ be a Lipschitz set of a switched network $(M,X)$ satisfying $\overline{\rho}(S) < 1$, and let $(\{F^{(k)}\}_{k=1}^\infty,X)$ be an instance of $(M,X)$.
Then for all initial conditions $\mathbf{x}^0,\mathbf{y}^0\in X$, there exists some $C>0$ such that
\[
d_{max}(\mathscr{F}^k(\mathbf{x}^0),\mathscr{F}^k(\mathbf{y}^0)) \le C\overline{\rho}(S)^k.
\]
Additionally, if $\mathbf{x}^*$ is a shared fixed point of $(F,X)$ for all $F\in M$, then $\lim_{k\to\infty}\mathscr{F}^k(\mathbf{x}^0)=\mathbf{x}^*$ for all initial conditions $\mathbf{x}^0\in X$.
\end{result}

Hence, if the joint spectral radius of the Lipschitz set $S$ of $M$ is less than $1$, all orbits in a switched network become asymptotically close to one another as time goes to infinity.
Even if this limit-orbit is not convergent to any fixed point, this result implies an asymptotic independence to initial conditions. An example of this is the following.

\begin{example}\label{ex:switchednet}
Let $(G,X)$ and $(H,X)$ be the Cohen-Grossberg neural networks given by
\begin{equation*}
G \bigg(\begin{bmatrix}
x_1 \\
x_2
\end{bmatrix} \bigg) =
\begin{bmatrix}
    (1-\epsilon_1)x_1-\frac{3}{4}\phi(x_2)+c_1 \\
    (1-\epsilon_1)x_2+\frac{3}{4}\phi(x_1)+c_2
\end{bmatrix} \ \ \text{and} \ \
H \bigg(\begin{bmatrix}
x_1 \\
x_2
\end{bmatrix} \bigg) =
\begin{bmatrix}
    (1-\epsilon_2)x_1+\frac{1}{4}\phi(x_2)+d_1 \\
    (1-\epsilon_2)x_2+\frac{1}{4
    }\phi(x_1)+d_2
\end{bmatrix},
\end{equation*}
respectively, in which $\sigma(x)=\tanh(x)$, $\epsilon_1=\frac{4}{5}$, $\epsilon_2=\frac{3}{10}$, $c_1=d_2=-1$, and $c_2=d_1=1$. Setting $M=\{G,H\}$ then $S$ is the Lipschitz set of $M$ given by

\[
S=\left\{
\begin{bmatrix}
\frac{1}{5}&\frac{3}{4}\\
\frac{3}{4}&\frac{1}{5}
\end{bmatrix},
\begin{bmatrix}
\frac{7}{10}&\frac{1}{4}\\
\frac{1}{4}&\frac{7}{10}
\end{bmatrix}
\right\}.
\]
It can be shown that the joint spectral radius $\overline{\rho}(S)=.95$ (by use of Proposition \ref{RIradius} in Section \ref{sec:5}). As this is less than 1, then for any instance $(\{F^{(k)}\}_{k=1}^\infty,X)$ of $M$ and any initial conditions $\mathbf{x}^0$ and $\mathbf{y}^0$, we have
\[
\lim_{k\rightarrow\infty}d_{max}(\mathcal{F}^k(\mathbf{x}^0),\mathcal{F}^k(\mathbf{y}^0))=0
\]
by the Main Result \ref{LimitOrbits}. This can be seen in Figure \ref{fig:switched} (left) where $(\{F^{(k)}\}_{k=1}^\infty,X)$ is the instance given by $\{F^{(k})\}_{k=1}^\infty=\{G,G,G,H,H,H,\dots\}$.

If we set $c_1=c_2=d_1=d_2=0$ in both $(G,X)$ and $(H,X)$ then both systems have the shared fixed point $\mathbf{0}$. In this case Main Result \ref{LimitOrbits} indicated that any instance $(\{F^{(k)}\}_{k=1}^\infty,X)$ of $M$ will be stable with the globally attracting fixed point $\mathbf{0}$. This is shown in Figure \ref{fig:switched} (right) where again $\{F^{(k})\}_{k=1}^\infty=\{G,G,G,H,H,H,\dots\}$.
\end{example}

\begin{figure}
\begin{center}
    \begin{overpic}[scale=.23
    ]{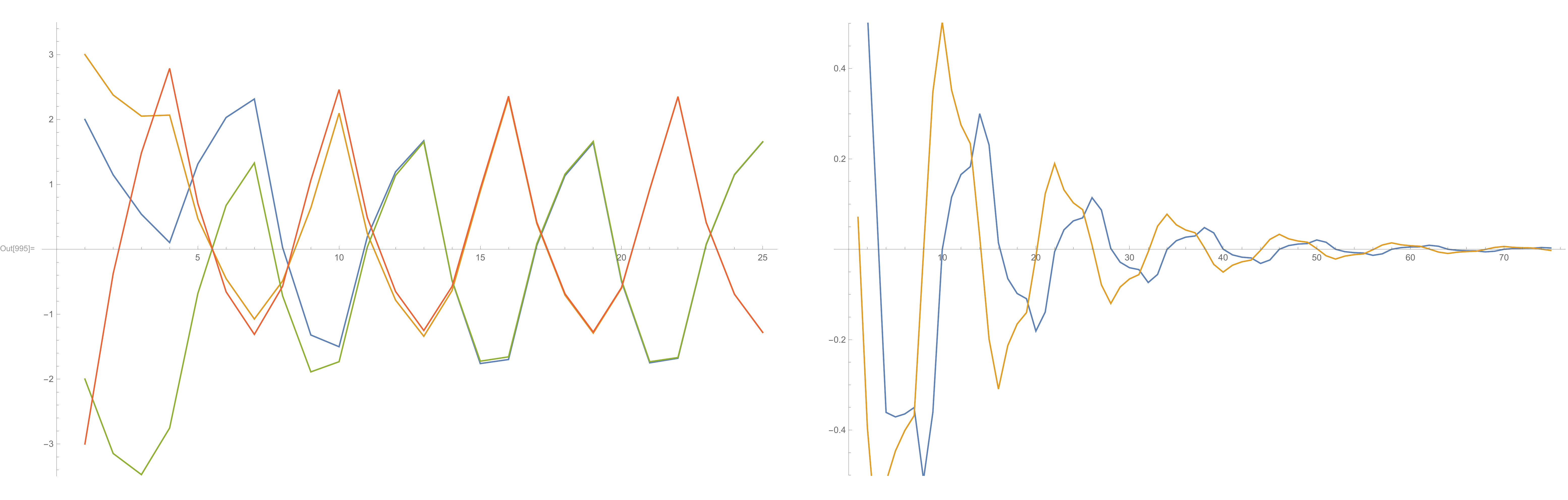}
    \put(2,-2){Independence to Initial Conditions} % was \put(10,-2)
    \put(53,-2){Dynamics with a Shared Fixed Point} % was \put(57,-2)
    \end{overpic}
\end{center}
  \caption{Left: The dynamics of the switched network in Example \ref{ex:switchednet} is shown for two different initial conditions $\mathbf{x}^0=(2,3)$ (shown in blue and yellow) and $\mathbf{y}^0=(-2,-3)$ (shown in green and red). As the corresponding joint spectral radius $\overline{\rho}(S)$ of the network is less than 1, the orbits of these initial conditions converge to each other. Right: Modifying this switched network so that both $F,G\in M$ have the shared fixed point $\mathbf{0}$ results in a stable switched system with the globally attracting fixed point $\mathbf{0}$.}\label{fig:switched}
\end{figure}

Thus, as might be expected from the complicated nature of a switched system, the condition $\overline{\rho}(S)<1$ alone is not able to match the strong implication of global stability, as $\rho(A)<1$ does for a dynamical network (see Theorem [\ref{ConstantIntrinsic}]).
Furthermore, it is often notoriously difficult to compute or approximate the joint spectral radius $\overline{\rho}(S)$ of a general set of matrices $S$ \cite{JointSpectral, JSRHard}.

Somewhat surprisingly, these issues are resolved when our switched system arises from a network experiencing time-varying time-delays.
In this case, the computation of the joint spectral radius reduces to computing the spectral radius of the Lipschitz matrix of the original undelayed dynamical network $(F,X)$, which for even large systems can be done efficiently using the power method \cite{powermethod}. This provides a general and computationally efficient method for verifying asymptotic stability despite time-varying time-delays.

\begin{result}{\textbf{\emph{(Intrinsic Stability and Time-Varying Time-Delayed Networks)}}}\label{TimeVaryingIntrinsic}
Suppose $(F,X)$ is intrinsically stable with $\rho(A)<1$, where $A\in\mathbb{R}^{n\times n}$ is a Lipschitz matrix of $F$ and $\mathbf{x}^\ast$ is the network's globally attracting fixed point.
Let $L>0$ and
\[
M_d=\{F_D|D\in\mathbb{N}^{n\times n}\text{ with }\max_{ij}d_{ij}\le L\}
\]
and let $S_d$ be the Lipschitz set of $M_d$.

\noindent Then $E_L(\mathbf{x}^*)$ is a globally attracting fixed point of every instance $(\{F_{D^{(k)}}\}_{k=1}^\infty,X_L)$ of $(M_d,X_L)$.
Furthermore, $\overline{\rho}(S_d)=\rho(A_L)<1$, where
\[
A_L = \begin{bmatrix}
\mathbf{0}_{n\times nL} & A \\
\mathbf{I}_{nL\times nL} & \mathbf{0}_{nL\times n}
\end{bmatrix}.
\]
\end{result}

Hence, any intrinsically stable dynamical network $(F,X)$ retains convergence to the same equilibrium even when it experiences time-varying time-delays.
This extends Theorem 2.3 of \cite{BunWebb1} to the much larger and more complicated class of switching-delay networks. Furthermore, note that intrinsic stability is a delay-independent result, which makes no assumption regarding the rate of growth of the time delays. In Main Result \ref{TimeVaryingIntrinsic} we call $\overline{\rho}(S_d)$ the \emph{convergence rate} of the system, since it provides the exponential bound on the rate at which all orbits converge to the fixed point $E_L(\mathbf{x}^*)$.

It is worth emphasizing that as a consequence of this result, to determine the asymptotic behavior of a switched network $(M,X_L)$ with $M=\{F_D|D\in\mathbb{N}^{n\times n}\text{ with }\max_{ij}d_{ij}\le L\}$, in which the presence and magnitude of time delays is not exactly known, it suffices to study the dynamics of the much simpler undelayed system $(F,X)$.

\begin{figure}
\begin{center}
    \begin{overpic}[scale=.4]{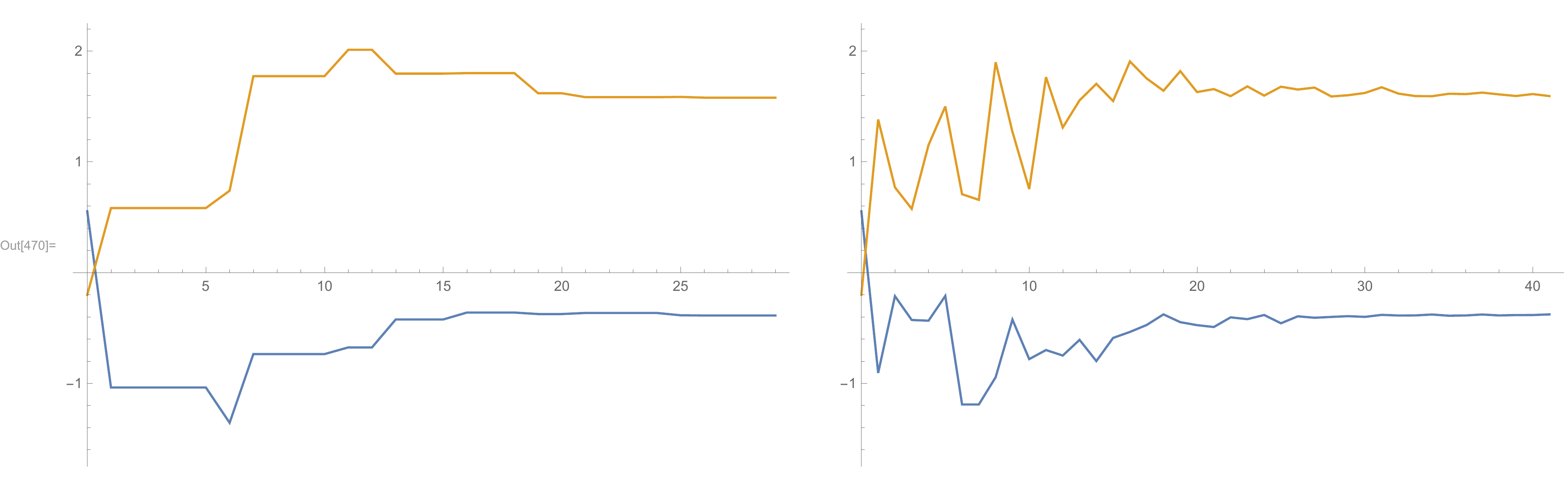}
    \put(5,-1){Periodically Delayed Network $(C_P,X_5)$}
    \put(56,-1){Stochastically Delayed Network $(C_U,X_{10})$}
    \end{overpic}
\end{center}
  \caption{Left: The dynamics of the intrinsically stable two-neuron Cohen-Grossberg network $(C,X)$ from Example \ref{ex:2} is shown in which the network has periodic time-varying time-delays. Right: The dynamics of the same Cohen-Grossberg network is shown in which the network has stochastic time-varying time-delays. Both systems are attracted to the fixed point $\mathbf{x}^*=(-.386,1.595)$ similar to the behavior shown in Figure \ref{fig:2}.}\label{fig:3}
\end{figure}

\begin{example}{\textbf{\emph{(Periodic and Stochastic Time-Varying Delays)}}}\label{PeriodicStochasticExample}
Consider the intrinsically stable Cohen-Grossberg neural network $(C,X)$ from Example \ref{ex:2} with the exception that we let $\{D^{(k)}\}_{k=1}^\infty$ be the sequence of delay distributions given by
\begin{equation}\label{periodicD}
D^{(k)} = \begin{bmatrix}
k\mod 5 &\quad k\mod 6 \\
k\mod 6 &\quad k\mod 5
\end{bmatrix}.
\end{equation}
These time-varying delays are periodic with period $30$ and $L=5$. The result of these delays on the network's dynamics can be seen in Figure \ref{fig:3} (left) where we let $(C_P,X_5)$ denote the network $(C,X)$ with time-varying time-delays given by \eqref{periodicD}.
Note that although the trajectories are altered by these delays they still converge to the same fixed point $\mathbf{x}^*=(-.386,1.595)$ as in the undelayed and constant time-delayed networks (cf. Figure \ref{fig:2}) as guaranteed by Main Result \ref{TimeVaryingIntrinsic}.

If instead we let
\begin{equation}\label{uniformD}
D^{(k)}_{U[0,10]} \in \mathbb{N}^{2\times 2}
\end{equation}
be the random matrix in which each entry is sampled uniformly from the integers $\{0,1,\hdots,10\}$ the resulting switched network is still stable, as guaranteed by Main Result \ref{TimeVaryingIntrinsic}. The network's trajectories still converge to the point $\mathbf{x}^*=(-.386,1.595)$ as shown in Figure \ref{fig:3} (right) where we let $(C_U,X_{10})$ denote the network $(C,X)$ with time-varying time-delays given by \ref{uniformD}.
\end{example}

\subsection{Application: Linear Systems with Distinct Delayed and Undelayed Interactions}\label{IntervalDelays}
A number of papers have published delay-dependant results regarding time-varying time-delayed systems whose delayed interactions are separated from the undelayed interactions (see , for instance, \cite{dependent1,dependent11,dependent12,dependent13,dependent14}). By \emph{delay-dependent} we mean that the criteria that determines whether a network is stable depends on the length of the delays the network experiences, as opposed to the delay-independent results of Section \ref{sec:4}. We demonstrate how to analyze these by use of Main Result \ref{TimeVaryingIntrinsic}, and compare our results to those in the literature.

Consider a linear system of the form
\begin{equation}\label{unlifted}
\mathbf{x}^{k+1} = A\mathbf{x}^k + B\mathbf{x}^{k-\tau(k)} \quad \text{where}\quad 1\le \tau(k) \le L \quad \text{for}\quad k\geq 0
\end{equation}
where $A,B\in\mathbb{R}^{n\times n}$ are constant matrices, and $\tau(k)$ is a positive integer representing the magnitude of the time-varying time-delay, bounded by some $L>0$.
Here $A$ and $B$ represent distinct weights of the delayed and undelayed interactions.

The minimally delayed version of system (\ref{unlifted}) is given by
\begin{equation}\label{UnliftedUndelayed}
\mathbf{x}^{k+1} = A\mathbf{x}^k + B\mathbf{x}^{k-1},
\end{equation}
which we can express in terms of a single transition matrix $\widetilde{A}$ as
\begin{equation}\label{Lifted}
\widetilde{\mathbf{x}}^{k+1} =
\widetilde{A}\widetilde{\mathbf{x}}^k\quad \text{where}\quad
\widetilde{A} = \begin{bmatrix}
A & B \\
I_{n\times n} & \mathbf{0}_{n\times n}
\end{bmatrix}\quad \text{and}\quad
\widetilde{\mathbf{x}}^k=\begin{bmatrix}
\mathbf{x}^{k} \\
\mathbf{x}^{k-1}
\end{bmatrix}.
\end{equation}
As in \cite{automatica_lifted}, we say that system (\ref{Lifted}) is the \emph{lifted representation} of system (\ref{UnliftedUndelayed}).

Now observe that we can represent system (\ref{unlifted}) in the notation of Main Result \ref{TimeVaryingIntrinsic} as the system $(F,\mathbb{R}^{2n})$ where
\[
F(\widetilde{\mathbf{x}}^{k}) = \widetilde{A}\widetilde{\mathbf{x}}^k
\]
where, since $F$ is linear, the Lipschitz matrix of $F$ is given by $\left\vert\widetilde{A}\right\vert\in\mathbb{R}^{2n \times 2n}$ and the zero vector $\mathbf{0}\in\mathbb{R}^n$ is a fixed point. Then system (\ref{unlifted}) is the switched system instance $(\{F_{D^{(k)}}\}_{k=1}^\infty,X_L)$ obtained from the sequence of delay distributions
\[
D^{(k)} = \tau(k)\begin{bmatrix}
\mathbf{0}_{n\times n} & \mathbf{1}_{n\times n} \\
\mathbf{0}_{n\times n} & \mathbf{0}_{n\times n}
\end{bmatrix}\
\]
ensuring that delays only occur to the delayed interactions modeled by $B$, with magnitude $\tau(k)\leq L$.

It follows immediately from Main Result \ref{TimeVaryingIntrinsic} that the system (\ref{unlifted}) is stable for arbitrary large delay bounds $L$ when $(F,\mathbb{R}^{2n})$ is intrinsically stable, that is when $\rho(\widetilde{A})<1$ is satisfied.

\begin{example}{\textbf{\emph{(Intrinsically Stable)}}}\label{IntStable4.1}
Consider system (\ref{unlifted}) with
\[
A = \begin{bmatrix}
0.6 & 0 \\
0.35 & 0.7
\end{bmatrix},\quad  B = \begin{bmatrix}
0.1 & 0 \\
0.2 & 0.1
\end{bmatrix}.
\]

The transition matrix of the lifted representation is
\[
\widetilde{A} = \begin{bmatrix}
0.6 & 0 & 0.1 & 0 \\
0.35 & 0.7 & 0.2 & 0.1 \\
1&0&0&0 \\
0&1&0&0
\end{bmatrix}
\]
which satisfies $\rho(\left\vert\widetilde{A}\right\vert) \approx 0.822 <1$. Then this system is intrinsically stable, so is in fact stable for an arbitrarily large delay bound $L>0$.
In the following table, we compare this delay-independent result with the delay-dependent results of \cite{dependent1,dependent11,dependent12,dependent13,dependent14}:
\begin{center}
\begin{tabular}{ |c|c| }
 \hline
 Method & Max Upper Bound $L$ \\
 \hline
 Theorem 3.1 of \cite{dependent11} & $10$ \\
 \hline
 Theorem 1, Theorem 2 of \cite{dependent12} & $13$ \\
 \hline
 Theorem 3.2 of \cite{dependent13} & $12$ \\
 \hline
 Theorem 1 of \cite{dependent14} & $15$ \\
 \hline
 Theorem 2 of \cite{dependent1} & $10\cdot 10^{21}$ \\
 \hline
 Main Result \ref{TimeVaryingIntrinsic} of this paper & $\infty$ \\
 \hline
\end{tabular}
\end{center}
It is worth noting that the methods of \cite{dependent1,dependent11,dependent12,dependent13,dependent14} employ techniques involving linear matrix inequalities and Lyapunov functionals. These methods are avoided with a straight-forward application of Main Result \ref{TimeVaryingIntrinsic}, which only requires computing the spectral radius of single $4\times 4$ matrix. As elaborated in Section \ref{sec:6}, checking intrinsic stability is extremely computationally efficient. Furthermore, because intrinsic stability is a delay-independent result, it becomes immediately clear that an intrinsically stable network is stable for arbitrarily large delay bounds $L$.
\end{example}

Beyond improving the results described in Example \ref{IntStable4.1}, in a similar manner, we can also improve the results of Example 2 of \cite{dependent4}. In doing so we replicate the delay-independent result of Example 6.1 of \cite{dependent2}, in which the system is found to be stable for arbitrarily large $L$. However, we do so in a much more computationally efficient manner, without having to solve a series of linear matrix inequalities (see \cite{dependent4}).

There are systems of the form (\ref{unlifted}) which are not intrinsically stable, in which case delay-dependent results provide greater insight.

\begin{example}{\textbf{\emph{(Not Intrinsically Stable)}}}\label{next:ex}
Consider system (\ref{unlifted}) with
\[
A = \begin{bmatrix}
0.8 & 0 \\
0.05 & 0.9
\end{bmatrix},\quad  B = \begin{bmatrix}
-0.1 & 0 \\
-0.2 & -0.1
\end{bmatrix}.
\]
The transition matrix of the lifted representation is
\[
\widetilde{A} = \begin{bmatrix}
0.8 & 0 & -0.1 & 0 \\
0.05 & 0.9 & -0.2 & -0.1 \\
1&0&0&0 \\
0&1&0&0
\end{bmatrix}
\]
which satisfies $\rho(\left\vert\widetilde{A}\right\vert) =1$.
Hence this system is not intrinsically stable, even though \cite{dependent1} showed that it is stable for all $0\le L \le 9.61\times 10^8$.
\end{example}

Even though some systems that are not intrinsically stable turn out to be stable, at least for certain types of delays, the fact that intrinsic stability can be verified with relatively little effort may be reason enough to check. In fact, it may be the case that a spectral radius slightly above or equal to 1 is an indication that a system's stability is resilient to time delays as in Example \ref{next:ex}. However, this is still an open question. For further analysis of the computational complexity of checking intrinsic stability, see section \ref{sec:6}.

\section{Row-Independence Closure of Switched Networks}\label{sec:5}
Using Main Result \ref{LimitOrbits} and Main Result \ref{TimeVaryingIntrinsic}, we can extend our analysis of systems with time-varying time-delays to a more general class of switched networks.

\begin{definition}{\textbf{\emph{(Row-Independence Closure)}}}\label{RowIndependent}
Let $(M,X)$ be a switched network with Lipschitz set $S$.
Then we denote $RI(S)$, the \emph{row-independence closure} of $S$, by
\[
RI(S) = \{A^*|\text{ }\mathbf{a}^*_i = \mathbf{a}^{(i)}_i \text{ for some } A^{(1)},\hdots,A^{(n)}\in S\}
\]
where $\mathbf{a}^{(i)}_i$ denotes the $i^{th}$ row of the $i^{th}$ matrix $A^{(i)}$.
\end{definition}

The idea is that row-independence indicates that there is no conditional relationship between rows of the matrices in $RI(S)$.
Note that $S\subset RI(S)$.

The row-independence closure provides a computationally efficient sufficient condition for satisfying the hypothesis of Main Result \ref{LimitOrbits}.
The following proposition follows directly from Definition \ref{RowIndependent} and the results of \cite{JointSpectral} (restated as Theorem \ref{JSRindpependent} in the Appendix).

\begin{proposition}{\textbf{\emph{(Row-Independence Closure, Intrinsic Stability, and Switched Networks)}}}\label{RIradius}
Let $(M,X)$ be a switched network with Lipschitz set $S$.
Then
\[\overline{\rho}(S)\le \overline{\rho}(RI(S)) = \max_{A\in RI(S)}\rho(A).
\]
\end{proposition}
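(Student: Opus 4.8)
The plan is to decompose the statement into two parts: the inequality $\overline{\rho}(S)\le\overline{\rho}(RI(S))$ and the identity $\overline{\rho}(RI(S)) = \max_{A\in RI(S)}\rho(A)$. The first part follows immediately from the containment $S\subset RI(S)$ noted after Definition \ref{RowIndependent}: any sequence $\{A_i\}_{i=1}^\infty\subset S$ is also a sequence in $RI(S)$, so the defining bound $\|\mathbf{z}^k\|\le C(\overline{\rho})^k$ (Definition \ref{JointSpectraldefinition}) that holds for every sequence drawn from $RI(S)$ holds in particular for every sequence drawn from $S$; hence the smallest such constant for $S$ is no larger than that for $RI(S)$. (More abstractly, the joint spectral radius is monotone under set inclusion, which is immediate from the definition.)

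The substantive content is the identity $\overline{\rho}(RI(S)) = \max_{A\in RI(S)}\rho(A)$, and this is where I would invoke the cited result of \cite{JointSpectral} — restated in the excerpt as Theorem \ref{JSRindpependent} — which should say that for a set of matrices enjoying a suitable independence-across-rows property, the joint spectral radius equals the maximal ordinary spectral radius over the set. So the real work is to verify that $RI(S)$ satisfies the hypothesis of that theorem. I would argue this directly from Definition \ref{RowIndependent}: a matrix lies in $RI(S)$ precisely when each of its rows is chosen, independently of the other rows, from the pool of available $i^{th}$ rows $\{\mathbf{a}^{(i)}_i : A^{(i)}\in S\}$. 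Thus $RI(S)$ is exactly the Cartesian product (over rows $i=1,\dots,n$) of the projection sets $R_i = \{\mathbf{a}_i : A\in S\}$ of $i^{th}$ rows appearing in $S$ — equivalently, $RI(S)$ is ``rectangular'' or ``row-decoupled,'' which is the precise form of the hypothesis in \cite{JointSpectral}. Once this structural identification is in hand, Theorem \ref{JSRindpependent} applies verbatim and yields the equality, and the chain $\overline{\rho}(S)\le\overline{\rho}(RI(S)) = \max_{A\in RI(S)}\rho(A)$ is complete.

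I expect the main obstacle to be purely expository rather than mathematical: making sure the independence condition in the definition of $RI(S)$ is stated in exactly the form assumed by the external theorem of \cite{JointSpectral}, and checking that the max on the right is attained (it is, since $RI(S)$ is finite whenever $S$ is finite, and otherwise the supremum should be read in place of the max, or compactness of $S$ invoked if $S$ is assumed compact). A secondary point worth a sentence is well-definedness: $\max_{A\in RI(S)}\rho(A)$ is finite because each row-pool $R_i$ is drawn from the finitely (or compactly) many matrices of $S$, so the entries are uniformly bounded and $\rho(A)$ is bounded over $RI(S)$. No delicate estimate is needed beyond what Theorem \ref{JSRindpependent} already packages.
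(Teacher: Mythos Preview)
Your proposal is correct and matches the paper's proof essentially step for step: the paper likewise splits into the monotonicity step $\overline{\rho}(S)\le\overline{\rho}(RI(S))$ from $S\subset RI(S)$ (justified there via the Rota--Strang formula, Theorem \ref{AlternateJointSpectral}, rather than directly from Definition \ref{JointSpectraldefinition}), then defines the row-pools $Q_i=\{\mathbf{a}_i : A\in S\}$, observes that $RI(S)$ has independent row uncertainty, and invokes Theorem \ref{JSRindpependent}. One small point you might add explicitly is that Theorem \ref{JSRindpependent} is stated for \emph{nonnegative} matrices, which holds here because every Lipschitz matrix has $a_{ij}\ge 0$ by Definition \ref{Lipschitz Matrix}.
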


This allows for the following extension of Main Result \ref{TimeVaryingIntrinsic}, which provides a sufficient condition ensuring that when time-varying time-delays are applied to a stable already-switched system, the new resulting switched system retains stability.

\begin{result}{\textbf{\emph{(Intrinsic Stability and Row-Independent Switched Networks)}}}\label{RIIntrinsic}
Let $(M,X)$ be a switched network with Lipschitz set $S$.
Assume $\mathbf{x}^*$ is a shared fixed point of $(F,X)$ for all $F\in M$ and $\rho(A)<1$ for all $A\in RI(S)$.
Let $L>0$ and
\[
M_d=\{F_D|F\in M,D\in\mathbb{N}^{n\times n}\text{ with }\max_{ij}d_{ij}\le L\}
\]
and let $S_d$ be the Lipschitz set of $M_d$.

\noindent Then $E_L(\mathbf{x}^*)$ is a globally attracting fixed point of every instance $(\{F^{(k)}_{D^{(k)}}\}_{k=1}^\infty,X_L)$ of $(M_d,X_L)$.
Furthermore,
\[\overline{\rho}(S_d)\le \max_{A\in RI(S)}\rho(A_L)<1\]
where given some $A\in\mathbb{R}^{n\times n}$, $A_L$ is defined as
\[
A_L = \begin{bmatrix}
\mathbf{0}_{n\times nL} & A \\
\mathbf{I}_{nL\times nL} & \mathbf{0}_{n\times n}
\end{bmatrix}.
\]
\end{result}

Thus, a switched network with a shared fixed point which also satisfies $\rho(A)<1$ for all $A\in RI(S)$ retains convergence to the same equilibrium, even when it experiences time-varying time-delays. This extends Main Result \ref{TimeVaryingIntrinsic} further to the even more complicated class of switched networks with time-varying time-delays.

\subsection{Application: Switched Linear Systems with Distinct Delayed and Undelayed Interactions}
We now extend our analysis of Section \ref{IntervalDelays} to the case where system (\ref{unlifted}) is also a switched system:
\begin{equation}\label{unliftedSwitched}
\mathbf{x}^{k+1} = A_{\sigma(k)}\mathbf{x}^k + B_{\sigma(k)}\mathbf{x}^{k-\tau(k)} \quad \text{where}\quad 1\le \tau(k) \le L
\end{equation}
where as before, each $A_{\sigma(k)},B_{\sigma(k)}\in\mathbb{R}^{n\times n}$ are constant matrices indexed by $\sigma(k)$, and $\tau(k)$ is a positive integer representing the magnitude of the time-varying time-delay, bounded by some $L>0$. The difference between system (\ref{unliftedSwitched}) and system (\ref{unlifted}) is that in system (\ref{unliftedSwitched}) there are multiple possibilities for the transition weights of the delayed and undelayed interactions given by $A_{\sigma(k)}$ and $B_{\sigma(k)}$, respectively.

The minimally delayed version of system (\ref{unliftedSwitched}) is given by
\begin{equation}\label{UnliftedUndelayedSwitched}
\mathbf{x}^{k+1} = A_{\sigma(k)}\mathbf{x}^k + B_{\sigma(k)}\mathbf{x}^{k-1}
\end{equation}
so the lifted version of system (\ref{UnliftedUndelayedSwitched}) is
\begin{equation}\label{LiftedSwitched}
\widetilde{\mathbf{x}}^{k+1} =
\widetilde{A}_{\sigma(k)}\widetilde{\mathbf{x}}^k\quad \text{where}\quad
\widetilde{A}_{\sigma(k)} = \begin{bmatrix}
A_{\sigma(k)} & B_{\sigma(k)} \\
I_{n\times n} & \mathbf{0}_{n\times n}
\end{bmatrix}\quad \text{and}\quad
\widetilde{\mathbf{x}}^k=\begin{bmatrix}
\mathbf{x}^{k} \\
\mathbf{x}^{k-1}
\end{bmatrix}.
\end{equation}

Now observe that we may represent system (\ref{unliftedSwitched}) in the notation of Main Result \ref{RIIntrinsic} as the system $(M_0,\mathbb{R}^{2n})$ where
\[
M_0 = \{F|\text{ } F(\widetilde{\mathbf{x}}^{k}) = \widetilde{A}_{\sigma(k)}\widetilde{\mathbf{x}}^k\}
\]
where, since each $F$ is linear, the Lipschitz matrix of $F$ is given by $\left\vert\widetilde{A}_{\sigma(k)}\right\vert$ and the zero vector $\mathbf{0}\in\mathbb{R}^n$ is a shared fixed point.
Then system (\ref{unliftedSwitched}) is the switched system instance $(\{F^{(k)}_{D^{(k)}}\}_{k=1}^\infty,X_L)$ obtained from the sequence of transition matrices $\widetilde{A}_{\sigma(k)}$ and the sequence of delay distributions
\[
D^{(k)} = \tau(k)\begin{bmatrix}
\mathbf{0}_{n\times n} & \mathbf{1}_{n\times n} \\
\mathbf{0}_{n\times n} & \mathbf{0}_{n\times n}
\end{bmatrix}\
\]
once the sequences $\sigma(k)$ and $\tau(k)$ are determined. This ensures that delays only occur to the delayed interactions modeled by $B_{\sigma(k)}$, with magnitude $\tau(k)$. It follows immediately from Main Result \ref{RIIntrinsic} that the system (\ref{unliftedSwitched}) is stable for arbitrary large delay bounds $L$ when $(M_0,\mathbb{R}^{2n})$ satisfies the relatively simple condition $\rho(A)<1$ for all $A\in RI(S)$ of the Lipschitz set $S$ of $M_0$.

We apply our analysis to two examples from \cite{automatica_switched} to demonstrate the effectiveness of Main Result \ref{RIIntrinsic}.

\begin{example}{\textbf{\emph{(Row-Independent Switched Network)}}}\label{ex:5.1}
Consider system (\ref{unliftedSwitched}) with
\[
A_1 =A_2= \begin{bmatrix}
0 & 0.3 \\
-0.2 & 0.1
\end{bmatrix},\quad  A_3=A_4 = \begin{bmatrix}
0 & 0.3 \\
-0.2 & -0.1
\end{bmatrix}
\]
\[
B_1 =B_3= \begin{bmatrix}
0 & 0.1 \\
0 & 0.2
\end{bmatrix},\quad  B_2=B_4 = \begin{bmatrix}
0 & 0.1 \\
0 & 0
\end{bmatrix}
\]
In Example 1 of \cite{automatica_switched}, this system was shown to be exponentially stable for a delay bound of $L=13$.

The set $M_0$ of lifted transition matrices consists of the following four matrices:
\[
\widetilde{A}_1 = \begin{bmatrix}
0 & 0.3 & 0 & 0.1\\
-0.2 & 0.1 & 0 & 0.2 \\
1&0&0&0 \\
0&1&0&0
\end{bmatrix}
\quad
\widetilde{A}_2 = \begin{bmatrix}
0 & 0.3 & 0 & 0.1\\
-0.2 & 0.1 & 0 & 0 \\
1&0&0&0 \\
0&1&0&0
\end{bmatrix}
\]
\[
\widetilde{A}_3 = \begin{bmatrix}
0 & 0.3 & 0 & 0.1\\
-0.2 & -0.1 & 0 & 0.2 \\
1&0&0&0 \\
0&1&0&0
\end{bmatrix}
\quad
\widetilde{A}_4 = \begin{bmatrix}
0 & 0.3 & 0 & 0.1\\
-0.2 & -0.1 & 0 & 0 \\
1&0&0&0 \\
0&1&0&0
\end{bmatrix}
\]
Note that here $M_0=RI(M_0)$.
Furthermore, $\rho(\left\vert\widetilde{A}_1\right\vert)=\rho(\left\vert\widetilde{A}_3\right\vert)\approx 0.59<1$ and $\rho(\left\vert\widetilde{A}_2\right\vert)=\rho(\left\vert\widetilde{A}_4\right\vert)\approx 0.39<1$.
Hence, this switched system is intrinsically stable, so is in fact stable for any bound $L<\infty$, no matter the sequence of $A_{\sigma(k)}$ and $B_{\sigma(k)}$ that are chosen.
It is worth mentioning that this conclusion was reached without having to iteratively solve a system of linear matrix inequalities as in \cite{automatica_switched}.
\end{example}

In Example \ref{ex:5.1}, the set $M_0$ trivially satisfied $M_0=RI(M_0)$. In our next example, we consider the more nuanced case where $M_0$ is a proper subset of $RI(M_0)$.

\begin{example}{\textbf{\emph{(Row-Independent Closure of a Switched Network)}}}
Consider the following modified version of system (\ref{unliftedSwitched}), where we hold $A$ and $B$ constant in time, but add in a switching control term $\mathbf{c}_{\sigma(k)}\mathbf{u}^{k}$:
\begin{equation}\label{unliftedControl}
\mathbf{x}^{k+1} = A\mathbf{x}^k + B\mathbf{x}^{k-\tau(k)} + \mathbf{c}_{\sigma(k)}\mathbf{u}^{k} \quad \text{where}\quad 1\le \tau(k) \le L\quad \text{and}\quad   \mathbf{u}^{k} = \mathbf{q}^T\mathbf{x}^k.
\end{equation}
This system has the minimally-delayed lifted representation
\begin{equation}\label{LiftedControl}
\widetilde{\mathbf{x}}^{k+1} =
\widetilde{A}_{\sigma(k)}\widetilde{\mathbf{x}}^k\quad \text{where}\quad
\widetilde{A}_{\sigma(k)} = \begin{bmatrix}
A+\mathbf{c}_{\sigma(k)}\mathbf{q}^T & B \\
I_{n\times n} & \mathbf{0}_{n\times n}
\end{bmatrix}\quad \text{and}\quad
\widetilde{\mathbf{x}}^k=\begin{bmatrix}
\mathbf{x}^{k} \\
\mathbf{x}^{k-1}
\end{bmatrix}.
\end{equation}

In Example 2 of \cite{automatica_switched}, system \ref{unliftedControl} was shown to be stable with a upper delay bound $L=2$ for
\[
A = \begin{bmatrix}
0.7 & 0 \\
0.05 & 0.8
\end{bmatrix},\quad B = \begin{bmatrix}
-0.1 & 0 \\
-0.3 & -0.1
\end{bmatrix}, \quad \mathbf{q} = \begin{bmatrix}0.1510 \\ -0.2176\end{bmatrix}
\]
and
\[
\mathbf{c}_1 = \begin{bmatrix}0 \\ 0.01\end{bmatrix}, \quad
\mathbf{c}_2 = \begin{bmatrix}0.01 \\ 0\end{bmatrix}, \quad
\mathbf{c}_3 = \begin{bmatrix}0.01 \\ 0.01\end{bmatrix}.
\]

In this case we have three matrices in $M_0$:
\[
\widetilde{A}_1 = \begin{bmatrix}
0.7 & 0 & -0.1 & 0 \\
0.050151 & 0.7997824 & -0.3 & -0.1 \\
1&0&0&0 \\
0&1&0&0
\end{bmatrix}
\quad
\widetilde{A}_2 = \begin{bmatrix}
0.700151 & -0.0002176 & -0.1 & 0 \\
0.05 & 0.8 & -0.3 & -0.1 \\
1&0&0&0 \\
0&1&0&0
\end{bmatrix}
\]
\[
\widetilde{A}_3 = \begin{bmatrix}
0.700151 & -0.0002176 & -0.1 & 0 \\
0.050151 & 0.7997824 & -0.3 & -0.1 \\
1&0&0&0 \\
0&1&0&0
\end{bmatrix}.
\]
However, $RI(M_0)$ contains $\widetilde{A}_1,\widetilde{A}_2,\widetilde{A}_3$ as well as the following fourth matrix
\[
\widetilde{A}_4 = \begin{bmatrix}
0.7 & 0 & -0.1 & 0 \\
0.05 & 0.8 & -0.3 & -0.1 \\
1&0&0&0 \\
0&1&0&0
\end{bmatrix} \quad \text{corresponding to}\quad \mathbf{c}_4 = \begin{bmatrix}0 \\ 0\end{bmatrix}.
\]

Here $\rho(\left\vert\widetilde{A}_1\right\vert)\approx 0.9097<1$, $\rho(\left\vert\widetilde{A}_2\right\vert)\approx 0.9106<1$, $\rho(\left\vert\widetilde{A}_1\right\vert)\approx 0.9104<1$, $\rho(\left\vert\widetilde{A}_1\right\vert)\approx 0.9099<1$.
Thus this switched network is intrinsically stable. Therefore, using Main Result \ref{RIIntrinsic} we have been able to efficiently show that this system is stable for arbitrarily large delay bounds $L$.
\end{example}

It is worth emphasizing that while each of the examples in this section considers linear switched networks, the analysis applies directly to nonlinear switched networks as well, once the stability set $M_0$ of the various nonlinear mappings is known.

\section{Analytical and Computational Considerations}\label{sec:6}
We now summarize how these results can be used to analyze dynamical systems with a network structure. That is, for a dynamical network with either constant time-delays $(F_D,X_L)$ or network with time-varying time-delays with instances given by $(\{F_{D^{(k)}}\},X_L)$ using the following steps:\\

\emph{Step 1:} Consider the simpler undelayed version of the network $(F,X)$ where all interactions occur instantaneously. For a system given by Equation \eqref{unlifted} consider the minimally delayed version of the system given by Equation \eqref{UnliftedUndelayed}.\\

\emph{Step 2:} Find a Lipschitz matrix $A$ of the network by Equation \eqref{StabilityMatrix} if the mapping $F$ is piecewise differentiable and $X=\mathbb{R}^n$, otherwise directly determine the Lipschitz constants $A=[a_{ij}]$ by use of Definition \ref{Lipschitz Matrix}. Recall that there are infinitely many Lipschitz matrices $A$ for a given dynamical network. Ideally we would like to find one which minimizes the spectral radius $\rho(A)$, as this improves our estimate of the convergence rate of the system to its unique equilibrium, if $\rho(A)<1$. However, in certain cases it may simplify analysis considerably to merely find a bound for each entry of the Lipschitz matrix. Specifically, a bound that shows $\rho(A)<1$ for some Lipshitz matrix $A$.\\

\emph{Step 3:}
The spectral radius $\rho(A)$ can be computed efficiently in $\mathcal{O}(mn)$ time by use of the power method, where $m$ is the number of nonzero entries in $A$ \cite{powermethod}. As soon as a single Lipschitz matrix $A$ of $(F,X)$ satisfies $\rho(A)<1$, even if this $A$ does not have minimal spectral radius, the results of Theorem \ref{ConstantIntrinsic} and Main Result \ref{TimeVaryingIntrinsic} apply. These results guarantee that all delayed versions of $(F,X)$ are stable, even if the delays are varying in time so long as the magnitude of these delays are eventually bounded by some $L<\infty$. Moreover, the convergence rate of the delayed system is given by $\rho(A_L)$, where $A_L$ is defined in Main Result \ref{TimeVaryingIntrinsic}. Since $A_L$ is sparse with $m+nL$ nonzero entries, $\rho(A_L)$ can also be computed in $\mathcal{O}(mnL+n^2L^2)$ time using the power method.

\section{Conclusion}\label{sec:8}
The method described in this paper for determining whether a network is stable or can be destabilized by time-delays has a number of advantages over other methods. First, one need not consider the system itself but rather the simpler undelayed, and therefore lower-dimensional, version of the system. Second, the method(s) described here, at least for networks, require only the computation and spectral analysis of a single matrix rather than the use of Lyapunov-type methods, Linear Matrix Inequalities, or Semi-Definite Programming methods. Hence, very large systems can be analyzed using this method under the condition that their Lipschitz matrix can be efficiently computed. Moreover, if a system (process) can be shown (designed) to be intrinsically stable, there is no need to formally include delays in the system (model). The reason is that delays do change the qualitative dynamics of the system and it will same asymptotic state whether or not its delays are included. As has been shown, this is not the case for general dynamical networks.

One question that remains open is if a system is not intrinsically stable does there exist a time-delayed version of the system that is unstable. Another is that certain systems may be only \emph{locally intrinsically stable}, meaning that delaying certain network interactions may have no effect on the network's stability while delaying other interactions may change the system's dynamics. Determining for a given nonintrinsically stable network which is which would be important for determining which parts of the network are susceptible to this specific type of attack.

Lastly, as mentioned in the introduction networks are dynamic in two distinct ways. The first is the one considered in this paper, which is the changing state of the network's elements. The seconding is the evolving structure or \emph{topology} of the network. As time-varying time-delays effect the network's structure of interactions these delays also effect the underlying topology of the network. An important implication of this paper is that certain topological changes to a network, e.g. those induced by time-delays, can in general have a destabilizing effect on the network's dynamics. However, if the network's dynamics are intrinsically stable, this class of topological transformations does not effect the network's asymptotic state. It is unknown if there are other types of intrinsic dynamics, i.e. other stronger forms of dynamics, that are resilient to changes in the network's structure.

%%%%%%%%%%%%%%%%%%%%%%%%%%%%%%%%%%%%%%%%%%%%%%%%%%%%%%%%%%%%%%%%%%%%%%%%%%%%%%%%%%%%%%%%%%
%% \appendix changes the numbering of sections and sections for appendices. Any sections or
%% sections listed here will be treated as part of the appendix
\section{Appendix}

Here we give the proofs of the results found in this paper. We begin by proving the result(s) of Sections \ref{sec:3}.

\subsection{Appendix A}

A proof of Proposition \ref{FixedPoint} is the following.

\begin{proof}
Let $\mathbf{x}^* = [x_1^*,x_2^*,\hdots,x_n^*]^T$ be a fixed point of a dynamical network $(F,X)$.
Then by definition
\[
F_i(x_1^*,x_2^*,\hdots,x_n^*) = x_i^* \quad \emph{ for all } \quad 1 \le i \le n.
\]
Let $D$ be a delay distribution with $\max_{ij}d_{ij}\le L$ for some finite $L>0$.
With the usual ordering of the component spaces of $\mathbf{x}\in X_L$, we have
\[
F_D(E_L(\mathbf{x}^*)) = \begin{bmatrix}
(F_D)_{1,0}(x_1^*,x_2^*,\hdots,x_n^*) \\
(F_D)_{2,0}(x_1^*,x_2^*,\hdots,x_n^*) \\
\vdots \\
(F_D)_{n,0}(x_1^*,x_2^*,\hdots,x_n^*) \\
(F_D)_{1,1}(x_1^*) \\
(F_D)_{2,1}(x_2^*) \\
\vdots \\
(F_D)_{n,L}(x_n^*)
\end{bmatrix} = \begin{bmatrix}
F_1(x_1^*,x_2^*,\hdots,x_n^*) \\
F_2(x_1^*,x_2^*,\hdots,x_n^*) \\
\vdots \\
F_n(x_1^*,x_2^*,\hdots,x_n^*) \\
x_1^* \\
x_2^* \\
\vdots \\
x_n^*
\end{bmatrix} = \begin{bmatrix}
x_1^* \\
x_2^* \\
\vdots \\
x_n^* \\
x_1^* \\
x_2^* \\
\vdots \\
x_n^* \\
\end{bmatrix} = E_L(\mathbf{x}^*).
\qedhere
\]
Hence, the extended fixed point $E_L(x^*)$ is a fixed point of $(F_D,X_L)$.
\end{proof}

\subsection{Appendix B}

Next we give a proof of Main Result \ref{LimitOrbits}, one of the two main results found in Section \ref{sec:4}.

\begin{proof}
Let $\mathbf{x},\mathbf{y} \in X$ and $k > 0$ be arbitrary.
Note that for any $F\in M$ with corresponding $A\in S$ we have, by definition of $A$ being a Lipschitz matrix of $F$, that
\[
\begin{bmatrix}
d_1(F_1(\mathbf{x}),F_1(\mathbf{y})) \\
\vdots \\
d_n(F_n(\mathbf{x}),F_n(\mathbf{y}))
\end{bmatrix} \preceq A
\begin{bmatrix}
d_1(x_1,y_1) \\
\vdots \\
d_n(x_n,y_n)
\end{bmatrix}
\]
where $\preceq$ denotes an element-wise inequality.
Thus, for the specific instance $(\{F^{(k)}\}_{k=1}^\infty,X)$ given in the hypothesis, we have inductively

\begin{align*}
\begin{bmatrix}
d_1(\mathscr{F}_1^k(\mathbf{x}),\mathscr{F}_1^k(\mathbf{y})) \\
\vdots \\
d_n(\mathscr{F}_n^k(\mathbf{x}),\mathscr{F}_n^k(\mathbf{y}))
\end{bmatrix} &=
\begin{bmatrix}
d_1(F_1^{(k)} \circ \mathscr{F}^{k-1}(\mathbf{x}),F_1^{(k)} \circ \mathscr{F}^{k-1}(\mathbf{y})) \\
\vdots \\
d_n(F_n^{(k)} \circ \mathscr{F}^{k-1}(\mathbf{x}),F_n^{(k)} \circ \mathscr{F}^{k-1}(\mathbf{y}))
\end{bmatrix} \\
&\preceq A^{(k)}
\begin{bmatrix}
d_1(\mathscr{F}_1^{k-1}(\mathbf{x}),\mathscr{F}_1^{k-1}(\mathbf{y})) \\
\vdots \\
d_n(\mathscr{F}_n^{k-1}(\mathbf{x}),\mathscr{F}_n^{k-1}(\mathbf{y}))
\end{bmatrix} \\
&\preceq A^{(k)} A^{(k-1)} \hdots A^{(1)}
\begin{bmatrix}
d_1(x_1,y_1) \\
\vdots \\
d_n(x_n,y_n)
\end{bmatrix}.
\end{align*}
By the definition of the joint spectral radius, there exists some positive constant $C$ (possibly dependant on $\mathbf{x}$ and $\mathbf{y}$) such that
\begin{align*}
d_{max}(\mathscr{F}^k(\mathbf{x}),\mathscr{F}^k(\mathbf{y})) &=
\left\Vert \begin{bmatrix}
d_1(\mathscr{F}_1^k(\mathbf{x}),\mathscr{F}_1^k(\mathbf{y})) \\
\vdots \\
d_n(\mathscr{F}_n^k(\mathbf{x}),\mathscr{F}_n^k(\mathbf{y}))
\end{bmatrix} \right \Vert_\infty \\
&\le
\left\Vert A^{(k)} A^{(k-1)} \hdots A^{(1)}
\begin{bmatrix}
d_1(x_1,y_1) \\
\vdots \\
d_n(x_n,y_n)
\end{bmatrix}
\right\Vert_\infty \\
&\le
C (\overline{\rho}(S))^k
\end{align*}
where $\left\Vert\mathbf{x}\right\Vert_\infty=\max_i\left\lvert x_i\right\rvert$.

Now, assume $\mathbf{x}^*$ is a shared fixed point of $(F,X)$ for all $F\in M$.
Then $\mathscr{F}^{1}(\mathbf{x}^*)=F^{(1)}(\mathbf{x}^*)=\mathbf{x}^*$, and if it is assumed that $\mathscr{F}^{k-1}(\mathbf{x}^*)=\mathbf{x}^*$, then it follows immediately that
\[
\mathscr{F}^{k}(\mathbf{x}^*) = F^{(k)} \circ \mathscr{F}^{k-1}(\mathbf{x}^*) = F^{(k)}(\mathbf{x}^*) = \mathbf{x}^*.
\]
Hence, by induction, $\mathbf{x}^*$ is a fixed point of $(\{F^{(k)}\}_{k=1}^\infty,X)$.
Thus
\[
d_{max}(\mathscr{F}^k(\mathbf{x}^0),\mathbf{x}^*)=d_{max}(\mathscr{F}^k(\mathbf{x}^0),\mathscr{F}^k(\mathbf{x}^*)) \le C\overline{\rho}(S)^k
\]
so $\lim_{k\to\infty}\mathscr{F}^k(\mathbf{x}^0)=\mathbf{x}^*$ for all initial conditions $\mathbf{x}^0\in X$.
\end{proof}

\subsection{Appendix C}

Next we prove Main Result \ref{RIIntrinsic}, which we do by dividing the proof of this result into several lemmata. First, we make explicit the effect of time delays on the Lipschitz matrix of a dynamical network.

\begin{lemma}{\textbf{\emph{(Structure of the Lipschitz Matrix of a Delayed Network)}}}\label{structure}
Let $(F,X)$ be a dynamical network with Lipschitz matrix $A=[a_{ij}] \in \mathbb{R}^{n\times n}$ and $D=[d_{ij}] \in \mathbb{N}^{n\times n}$ a delay distribution matrix with $\max_{i,j}d_{ij}\le L$.
Let $A_D$ be defined in terms of $A$ as
\[
A_D = \begin{bmatrix}
  A_0    &   A_1& \hdots & A_{L-1} &   A_L  \\
  I_n    &    0 & \hdots &    0    &    0   \\
  0      & I_n  & \hdots &    0    &    0   \\
  \vdots &\vdots& \ddots & \vdots  & \vdots \\
  0      &   0  & \hdots & I_n     &    0   \\
\end{bmatrix} \in \mathbb{R}^{n(L+1)\times n(L+1)}
\]
where each $A_\ell \in \mathbb{R}^{n\times n}$ is defined element-wise as
$A_{\ell} = \begin{bmatrix} a_{ij} 1_{d_{ij}=\ell} \end{bmatrix}$,
with the indicator function $1_{d_{ij}=\ell}$ defined as
\[
1_{d_{ij}=\ell} = \begin{cases}
1&\text{ if  }d_{ij}=\ell \\
0 & \text{ otherwise.}
\end{cases}\quad \text{ for } 0\le\ell\le L
\]
Then $A_D$ is a Lipschitz matrix of $(F_D,X_L)$.
\end{lemma}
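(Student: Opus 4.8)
The plan is to verify directly from Definition~\ref{Lipschitz Matrix} that $A_D$ satisfies the defining inequality for $(F_D, X_L)$, exploiting the block structure of both $F_D$ (Definition~\ref{ConstantDelayNetwork}) and $A_D$. That is, for arbitrary $\mathbf{x}, \mathbf{y} \in X_L$, with components indexed as $x_{i,\ell}$ for $1 \le i \le n$ and $0 \le \ell \le L$, I want to show that the vector of component-wise distances $d_{i,\ell}\big((F_D)_{i,\ell}(\mathbf{x}), (F_D)_{i,\ell}(\mathbf{y})\big)$ is bounded entry-wise by $A_D$ applied to the vector of distances $d_{i,\ell}(x_{i,\ell}, y_{i,\ell})$.

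First I would handle the ``lower'' block rows, i.e. the components $(F_D)_{i,\ell+1}$ for $0 \le \ell \le L-1$. By Equation~\eqref{identity} these are identity maps, so $d_{i,\ell+1}\big((F_D)_{i,\ell+1}(x_{i,\ell}), (F_D)_{i,\ell+1}(y_{i,\ell})\big) = d_i(x_{i,\ell}, y_{i,\ell})$, which is exactly the inequality encoded by the shifted identity blocks $I_n$ in the subdiagonal of $A_D$ (with equality, hence certainly $\le$). Second I would handle the top block row, the components $(F_D)_{i,0}$. By Equation~\eqref{same-but-later}, $(F_D)_{i,0}(\mathbf{x}) = F_i(x_{1,d_{i1}}, \dots, x_{n,d_{in}})$, so since $A$ is a Lipschitz matrix of $(F,X)$,
\[
d_i\big((F_D)_{i,0}(\mathbf{x}), (F_D)_{i,0}(\mathbf{y})\big) \le \sum_{j=1}^n a_{ij}\, d_j(x_{j,d_{ij}}, y_{j,d_{ij}}).
\]
The remaining task is to recognize the right-hand side as the $i$-th entry of $\big[A_0 \ A_1 \ \cdots \ A_L\big]$ times the distance vector: the coefficient multiplying $d_j(x_{j,\ell}, y_{j,\ell})$ is $a_{ij}$ when $\ell = d_{ij}$ and $0$ otherwise, which is precisely $a_{ij} 1_{d_{ij} = \ell}$, the $(i,j)$ entry of $A_\ell$. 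Summing over all $j$ and all $\ell \in \{0,\dots,L\}$ (with at most one $\ell$ contributing for each $j$) reproduces the sum above.

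Assembling the two cases shows the full distance vector for $F_D$ is bounded entry-wise by $A_D$ times the distance vector for the identity, which is the definition of $A_D$ being a Lipschitz matrix of $(F_D, X_L)$; also all entries of $A_D$ are manifestly nonnegative and finite since the $a_{ij}$ are. I do not expect a serious obstacle here: the only mild care needed is bookkeeping — making sure the ordering of component spaces of $X_L$ fixed after Definition~\ref{ConstantDelayNetwork} matches the block layout of $A_D$, and noting that for fixed $(i,j)$ the index $d_{ij}$ is a single value in $\{0,\dots,L\}$ so exactly one of the blocks $A_0,\dots,A_L$ carries the entry $a_{ij}$, with the rest zero. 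This indexing consistency is the one place to be explicit rather than the analytic content, which is immediate.
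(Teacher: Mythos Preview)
Your proposal is correct and follows essentially the same approach as the paper's proof: both verify the Lipschitz inequality componentwise, handling the identity-map components (giving the $I_n$ subdiagonal blocks) and the $(F_D)_{i,0}$ components via the Lipschitz bound for $F$ rewritten as $\sum_{\ell=0}^L\sum_{j=1}^n a_{ij}1_{d_{ij}=\ell}\,d_{j,\ell}(x_{j,\ell},y_{j,\ell})$. The only difference is the order in which the two cases are treated, which is immaterial.
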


\begin{proof}
Recall that for $\mathbf{x}\in X_L$, we order the components $x_{i,\ell}$ of $\mathbf{x}$ as
\[
\mathbf{x}=[x_{1,0},x_{2,0},\hdots,x_{n,0},x_{1,1},x_{2,1},\hdots,x_{n,L}]^T
\]
where $x_{i,\ell}\in X_{i,\ell}$ for $i=1,2,\hdots,n$ and $\ell=0,1,\hdots,L$.
Let $\mathbf{x},\mathbf{y}\in X$ be given.
Then
\begin{align*}
d_{i,0}\left((F_D)_{i,0})(\mathbf{x}),(F_D)_{i,0})(\mathbf{y})\right) &=
d_i\left(F_i(x_{1,d_{i1}},x_{2,d_{i2}},\hdots,x_{n,d_{in}}),F_i(y_{1,d_{i1}},y_{2,d_{i2}},\hdots,y_{n,d_{in}})\right) \\
&\le \sum_{j=1}^n a_{ij}d_j(x_{j,d_{ij}},y_{j,d_{ij}})
= \sum_{\ell=0}^L \sum_{j=1}^n a_{ij}1_{d_{ij}=\ell}d_{j,\ell}(x_{j,\ell},y_{j,\ell})
\end{align*}
which matches the first $n$ rows of $A_D$.
For $\ell\ge1$,
\[
d_{i,\ell}\left((F_D)_{i,\ell})(\mathbf{x}),(F_D)_{i,\ell})(\mathbf{y})\right)
= d_{i,\ell-1}(x_{i,\ell-1},y_{i,\ell-1}),
\]
which yields the identity matrices $I_n$ in $A_D$.
\end{proof}

The following theorem follows as a direct corollary of Lemma 3.3 of \cite{BunWebb2}, which is needed in our proof of Main Result \ref{RIIntrinsic}.

\begin{theorem}\label{bunwebbtheorem}
Let $(F,X)$ be a dynamical network with Lipschitz matrix $A$. Then for any delay distribution $D$ the constant time-delayed dynamical network $(F_D,X_D)$ has the Lipschitz matrix $A_D$ with\\
\indent (i) $\rho(A)\leq\rho(A_D)<1$ if $\rho(A)<1$;\\
\indent (ii) $\rho(A_D)=1$ if $\rho(A)=1$; and\\
\indent (iii) $\rho(A)\geq \rho(A_D)>1$ if $\rho(A)>1$.
\end{theorem}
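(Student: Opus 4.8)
The plan is as follows. The assertion that $A_D$ is a Lipschitz matrix of $(F_D,X_D)$ is already Lemma~\ref{structure}, so the only content left is the spectral trichotomy (i)--(iii), which concerns only the nonnegative matrices $A$ and $A_D$; it is exactly Lemma~3.3 of \cite{BunWebb2}, so one option is simply to invoke that result. To argue directly I would exploit the companion-like block structure of $A_D$. If $A_D v=\lambda v$ with $\lambda\neq 0$ and $v=(v_0,\dots,v_L)^{T}$ partitioned into $L+1$ blocks of size $n$, the identity blocks of $A_D$ force $v_{\ell}=\lambda^{-1}v_{\ell-1}$ for $1\le\ell\le L$, hence $v_{\ell}=\lambda^{-\ell}v_0$ with $v_0\neq \mathbf{0}$, and the first block-row of $A_Dv=\lambda v$ becomes $\left(\sum_{\ell=0}^{L}\lambda^{-\ell}A_{\ell}\right)v_0=\lambda v_0$. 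Writing $A(z):=\sum_{\ell=0}^{L}z^{-\ell}A_{\ell}$, this shows that a nonzero $\lambda$ lies in $\sigma(A_D)$ if and only if $\lambda\in\sigma(A(\lambda))$; and since $A=\sum_{\ell=0}^{L}A_{\ell}$ by construction, $A(1)=A$.

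Next I would set $h(r):=\rho(A(r))$ for $r>0$. Because every $A_\ell$ is entrywise nonnegative, $r\mapsto A(r)$ is entrywise non-increasing, so $h$ is non-increasing (monotonicity of the Perron root for nonnegative matrices) and continuous; moreover $h(1)=\rho(A)$. A lower bound on $\rho(A_D)$ comes from a Collatz--Wielandt test vector: if $v\succeq\mathbf{0}$, $v\neq\mathbf{0}$, is a Perron vector of $A$ and $w:=E_L(v)$, a direct block computation gives $A_D w=(\rho(A)v,\,v,\dots,v)^{T}\succeq\min\{\rho(A),1\}\,w$, and iterating (using $A_D\succeq\mathbf{0}$) gives $A_D^{k}w\succeq\min\{\rho(A),1\}^{k}w$, whence $\rho(A_D)\ge\min\{\rho(A),1\}$.

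To conclude, recall that $\rho(A_D)$ is a nonnegative real eigenvalue of $A_D$; if it is positive, the reduction of the first paragraph gives $\rho(A_D)\in\sigma(A(\rho(A_D)))$, hence $\rho(A_D)\le h(\rho(A_D))$. If $\rho(A)<1$, then $\rho(A_D)\ge 1$ is impossible, since it would give $\rho(A_D)\le h(\rho(A_D))\le h(1)=\rho(A)<1$; so $\rho(A_D)<1$, and together with $\rho(A_D)\ge\min\{\rho(A),1\}=\rho(A)$ this is (i) (the degenerate case $\rho(A_D)=0$ forces $\rho(A)=0$ by the lower bound above and is subsumed). If $\rho(A)=1$, the same contradiction rules out $\rho(A_D)>1$, while $\rho(A_D)\ge 1$ from the test vector gives (ii). If $\rho(A)>1$, then $\rho(A_D)\ge 1>0$, so $\rho(A_D)\le h(\rho(A_D))\le h(1)=\rho(A)$; for the strict inequality $\rho(A_D)>1$ I would use that $\phi(r):=h(r)-r$ is continuous with $\phi(1)=\rho(A)-1>0$ and, for $r\ge 1$, $\phi(r)\le\rho(A)-r\to-\infty$, so the intermediate value theorem furnishes $r_0>1$ with $\rho(A(r_0))=r_0$; then $r_0\in\sigma(A(r_0))$ by Perron--Frobenius, hence $r_0\in\sigma(A_D)$ by the first paragraph, so $\rho(A_D)\ge r_0>1$. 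I expect the strictness in (iii) to be the only nonroutine point: monotonicity of $h$ alone yields just $\rho(A_D)\ge 1$ there, and one genuinely needs the fixed-point/IVT step, together with the easy bookkeeping that the nilpotent case is harmless.
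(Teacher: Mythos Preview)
Your proposal is correct. The paper itself does not prove this theorem at all: it simply records that the statement ``follows as a direct corollary of Lemma~3.3 of \cite{BunWebb2}'' and moves on. You explicitly note this citation option and then supply a self-contained argument, so your write-up strictly contains the paper's treatment.

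The direct argument you give is a clean and standard one, and different in kind from a bare citation. Reducing the nonzero spectrum of $A_D$ to the fixed-point equation $\lambda\in\sigma\big(\sum_{\ell}\lambda^{-\ell}A_\ell\big)$ via the companion-block structure, then analysing the monotone map $h(r)=\rho(A(r))$ with $h(1)=\rho(A)$, is exactly the right mechanism; it makes transparent \emph{why} delays can only move $\rho$ toward~$1$. Your Collatz--Wielandt test with $w=E_L(v)$ handles the lower bound neatly, and you are right that case~(iii) genuinely requires the IVT step on $\phi(r)=h(r)-r$ to get strictness $\rho(A_D)>1$; monotonicity alone only yields $\rho(A_D)\ge 1$. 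Compared with the paper's approach (defer to \cite{BunWebb2}), yours buys self-containment and an explicit picture of the mechanism; the paper's buys brevity. Either is acceptable here since the theorem is used only as input to Main Result~\ref{RIIntrinsic}.
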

We thus have the following immediate corollary by Theorem \ref{bunwebbtheorem} part (i):
\begin{theorem}
Suppose $D$ and $\hat{D}$ are delay distribution matrices such that  $D\preceq\hat{D}$, i.e. entries of $D$ are less than or equal to the corresponding entries of $\hat{D}$. If $(F,X)$, $(F_D,X_D)$, and $(F_{\hat{D}},X_{\hat{D}})$ have the corresponding Lipschitz matrices $A$, $A_D$, and $A_{\hat{D}}$, respectively, with $\rho(A)<1$ then
\[
\rho(A)\leq\rho(A_D)\leq\rho(A_{\hat{D}})<1.
\]
\end{theorem}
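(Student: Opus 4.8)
The plan is to derive the entire chain from Theorem \ref{bunwebbtheorem} part (i), using it three times. First, applying Theorem \ref{bunwebbtheorem}(i) to $(F,X)$ with the delay distribution $D$ gives $\rho(A)\le\rho(A_D)<1$, and applying it with $\hat D$ gives $\rho(A)\le\rho(A_{\hat D})<1$. This already establishes the leftmost inequality $\rho(A)\le\rho(A_D)$ and the rightmost strict inequality $\rho(A_{\hat D})<1$; only the middle inequality $\rho(A_D)\le\rho(A_{\hat D})$ remains.

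For the middle inequality I would exhibit $(F_{\hat D},X_{\hat D})$ as a constant-time-delayed version of the already-delayed network $(F_D,X_D)$. Concretely, in $F_D$ the interaction carrying weight $a_{ij}$ into component $(i,0)$ issues from the chain coordinate $(j,d_{ij})$, which at time $k$ holds $x_j^{\,k-d_{ij}}$; delaying that single interaction by the additional amount $\hat d_{ij}-d_{ij}\ge 0$ (while leaving the identity/chain interactions undelayed) makes component $(i,0)$ read $x_j^{\,k-\hat d_{ij}}$, which is exactly the interaction pattern of $F_{\hat D}$. Call the resulting delay distribution on $(F_D,X_D)$ by $D'$. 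Since $\rho(A_D)<1$ from the first step, Theorem \ref{bunwebbtheorem}(i) applied to $(F_D,X_D)$ with $D'$ yields $\rho(A_D)\le\rho\big((A_D)_{D'}\big)<1$, and identifying $(A_D)_{D'}$ with $A_{\hat D}$ closes the chain $\rho(A)\le\rho(A_D)\le\rho(A_{\hat D})<1$.

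The delicate point — and the main obstacle — is this last identification. The delayed-network construction of Definition \ref{ConstantDelayNetwork} copies every component of $F_D$, so $\big((F_D)_{D'},\cdot\big)$ carries extra ``dangling'' chain coordinates that merely duplicate information already stored along $F_D$'s own chains $(j,d_{ij}),(j,d_{ij}+1),\dots$. I would show that, after permuting coordinates, the Lipschitz matrix $(A_D)_{D'}$ is block lower-triangular with a nilpotent block collecting all of these redundant and dangling coordinates, so that $\rho\big((A_D)_{D'}\big)$ equals the spectral radius of the essential block, which is exactly $A_{\hat D}$ up to a permutation. A self-contained alternative that avoids this bookkeeping: for nonnegative $A$ one checks, via Perron-Frobenius, that $\rho(A_D)$ is the unique $\lambda\in(0,1]$ solving $\rho\big([\,a_{ij}\,\lambda^{-(d_{ij}+1)}\,]\big)=1$; since $D\preceq\hat D$ together with $\lambda\le 1$ forces $[\,a_{ij}\lambda^{-(d_{ij}+1)}\,]\preceq[\,a_{ij}\lambda^{-(\hat d_{ij}+1)}\,]$ entrywise, monotonicity of the Perron root gives $\rho(A_D)\le\rho(A_{\hat D})$ directly. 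A reduction to the case where $\hat D$ differs from $D$ in a single incremented entry, handled by induction along a chain $D=D^{(0)}\preceq D^{(1)}\preceq\cdots\preceq D^{(m)}=\hat D$, is also available if one prefers to isolate the combinatorics.
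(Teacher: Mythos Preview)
Your main line is exactly the paper's: the paper declares this an ``immediate corollary'' of Theorem~\ref{bunwebbtheorem}(i) with no further argument, and your plan---apply part~(i) to $(F,X)$ with $D$ and with $\hat D$, then once more to the already-delayed network $(F_D,X_D)$ to get the middle inequality---is precisely what that one-liner must be unpacking. You have been more careful than the paper about the identification $(A_D)_{D'}\sim A_{\hat D}$, correctly noting the extra chain coordinates and handling them via a nilpotent block; the paper simply does not address this. Your Perron--Frobenius alternative, characterizing $\rho(A_D)$ as the root of $\rho\bigl([a_{ij}\lambda^{-(d_{ij}+1)}]\bigr)=1$ and invoking entrywise monotonicity, is a clean self-contained route not in the paper and arguably preferable since it avoids the bookkeeping altogether.
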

In other words, the spectral radius of the network is monotonic with respect to the addition of delays if $\rho(A)<1$.
% This implies the following corollary.

% \begin{corollary}
% Let $(F,X)$ with Lipschitz matrix $A$ and $\rho(A)<1$. If $\mathbb{D}$ is a set of delay-distribution matrices with $\max_{ij}d_{ij}\leq L$ for every $D\in\mathbb{D}$ then
% \[
% \max_{D\in\mathbb{D}}\rho(A_D)\leq\rho(A_{D_L}).
% \]
% \end{corollary}

We now require the following results regarding the joint spectral radius. First we need the following definition and theorem originally occurring as Equation (3.1) and Theorem 2 in \cite{JointSpectral} regarding sets of matrices with independent row uncertainties, respectively.

\begin{definition}{\textbf{\emph{(Independent Row Uncertainties)}}}
We say that a set of matrices $S\subset \mathbb{R}^{n \times n}$ has independent row uncertainty if $S$ can be expressed as
\[
S = \{(\mathbf{a}_1,\mathbf{a}_2,\hdots,\mathbf{a}_n)^T \text{ }| \text{ }\mathbf{a}_i \in Q_i, 1\le i\le n\}
\]
where the sets $Q_i\subset \mathbb{R}^n$, $1\le i\le n$ are closed and bounded.
\end{definition}

\begin{theorem}{\textbf{\emph{(Joint Spectral Radius of Nonnegative Matrices with Independent Row Uncertainty)}}}\label{JSRindpependent}
Let $S$ be a set of nonnegative matrices with independent row uncertainty. Then
\[
\overline{\rho}(S) = \max_{A\in S}\rho(A).
\]
\end{theorem}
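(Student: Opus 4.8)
The plan is to prove the two inequalities $\overline{\rho}(S)\ge\rho^{\ast}$ and $\overline{\rho}(S)\le\rho^{\ast}$ separately, writing $\rho^{\ast}:=\max_{A\in S}\rho(A)$ (this maximum exists since each $Q_i$ is compact and $\rho(\cdot)$ is continuous in the entries). The lower bound is routine: for a fixed $A\in S$ the constant sequence $A_k\equiv A$ gives $\mathbf{z}^k=A^k\mathbf{z}^0$, and Gelfand's formula $\lim_k\|A^k\|^{1/k}=\rho(A)$ rules out $\|\mathbf{z}^k\|\le C\overline{\rho}^k$ holding for all $\mathbf{z}^0$ unless $\overline{\rho}\ge\rho(A)$; maximizing over $A\in S$ gives $\overline{\rho}(S)\ge\rho^{\ast}$. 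So the real content is the reverse inequality.

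For $\overline{\rho}(S)\le\rho^{\ast}$ I would reduce everything to the following claim: for every $\epsilon>0$ there is a vector $v\succ 0$ with $Av\preceq(\rho^{\ast}+\epsilon)v$ componentwise for all $A\in S$. Granting the claim, I would equip $\mathbb{R}^n$ with the monotone weighted norm $\|x\|_v=\max_i|x_i|/v_i$. Since each $A\in S$ is nonnegative, $|Ax|\preceq A|x|\preceq\|x\|_v\, Av\preceq(\rho^{\ast}+\epsilon)\|x\|_v\, v$, so $\|Ax\|_v\le(\rho^{\ast}+\epsilon)\|x\|_v$; iterating along any sequence $\{A_i\}\subset S$ yields $\|\mathbf{z}^k\|_v\le(\rho^{\ast}+\epsilon)^k\|\mathbf{z}^0\|_v$, and equivalence of norms gives $\overline{\rho}(S)\le\rho^{\ast}+\epsilon$. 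Letting $\epsilon\downarrow 0$ finishes; this part is pure bookkeeping once the claim is available.

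To establish the claim I would introduce the ``value-iteration'' map $T:\mathbb{R}^n_{\ge 0}\to\mathbb{R}^n_{\ge 0}$ defined by $(Tv)_i=\max_{\mathbf{a}\in Q_i}\mathbf{a}^{T}v=\max_{A\in S}(Av)_i$, which is well defined (each $Q_i$ compact), continuous, order-preserving, and positively homogeneous. To force a strictly positive nonlinear eigenvector I would first perturb: replace $S$ by $S_\delta=\{A+\delta I:A\in S\}$, which still has independent row uncertainty (row $i$ now ranges over $Q_i+\delta\mathbf{e}_i$) and satisfies $\max_{A\in S_\delta}\rho(A)=\rho^{\ast}+\delta$, since $\rho(A+\delta I)=\rho(A)+\delta$ for nonnegative $A$ (easy via Perron--Frobenius). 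With $T_\delta$ the corresponding max-map, the map $v\mapsto T_\delta v/\sum_i(T_\delta v)_i$ is a continuous self-map of the simplex $\Sigma=\{v\succeq 0:\sum_i v_i=1\}$ (well defined because $(T_\delta v)_j\ge\delta v_j$ for any $j$ with $v_j>0$), so Brouwer yields $T_\delta v^{\ast}=\lambda v^{\ast}$ with $\lambda>0$. Here the hypothesis is essential: picking for each $i$ a row of some matrix in $S_\delta$ that attains $(T_\delta v^{\ast})_i$ and assembling these rows into one matrix $\widehat{A}$ is legitimate precisely because the rows are chosen independently, so $\widehat{A}\in S_\delta$ and $\widehat{A}v^{\ast}=\lambda v^{\ast}$; as $v^{\ast}\succeq 0$ is a nonzero eigenvector of the nonnegative $\widehat{A}$, Perron--Frobenius gives $\lambda\le\rho(\widehat{A})\le\rho^{\ast}+\delta$, whence $Av^{\ast}\preceq(A+\delta I)v^{\ast}\preceq T_\delta v^{\ast}=\lambda v^{\ast}\preceq(\rho^{\ast}+\delta)v^{\ast}$ for all $A\in S$.

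The hard part will be that this $v^{\ast}$ need not be strictly positive, whereas the weighted-norm reduction requires $v\succ 0$. I would handle this by a reducibility reduction: if $W=\{i:v^{\ast}_i>0\}$ is a proper subset, then $(T_\delta v^{\ast})_i=0$ for $i\notin W$ forces $a_{ij}=0$ for every $A\in S$ whenever $i\notin W$ and $j\in W$, so the coordinate subspace $\mathbb{R}^W$ is invariant under all $A\in S$; the restriction $S|_W$ again has independent row uncertainty, satisfies $\max_{A\in S|_W}\rho(A)\le\rho^{\ast}$ (a principal submatrix of a nonnegative matrix has no larger spectral radius), and admits $v^{\ast}|_W\succ 0$, on which the eigenvalue relation upgrades to $\lambda=\rho(\widehat{A}|_W)$ by strict positivity. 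Recursing over coordinate blocks, and then sending $\delta\downarrow 0$ along a subsequence on which the block structure stabilizes, produces for each $\epsilon>0$ a genuinely positive $v$ with $Av\preceq\rho^{\ast}v$ on each block; reassembling across blocks gives the vector required by the claim. In short, the Gelfand lower bound, the monotone-norm reduction, and the Brouwer-plus-row-reassembly step are straightforward; the care will go into the reducibility handling and the $\delta\to 0$ limit.
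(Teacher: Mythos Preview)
The paper does not prove this theorem; it is quoted from Blondel--Nesterov (the paper's reference for the joint spectral radius) and used as a black box, so there is no in-paper proof to compare against. Your strategy is in fact essentially the one Blondel--Nesterov use: a nonlinear Perron--Frobenius argument on the max-map $T$, with the row-independence hypothesis entering exactly where you say---to assemble the row-wise maximizers into a single witness matrix $\widehat{A}\in S$. The lower bound, the weighted-norm reduction, and the Brouwer step are all sound.

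There is, however, a gap in your reducibility handling. When $v^{\ast}$ has support $W\subsetneq\{1,\dots,n\}$ you correctly deduce $a_{ij}=0$ for $i\notin W$, $j\in W$, giving a block-triangular structure, and you obtain $A_{WW}(v^{\ast}|_W)\preceq(\rho^{\ast}+\delta)(v^{\ast}|_W)$ with $v^{\ast}|_W\succ 0$. But ``reassembling across blocks'' is not free: if by induction on dimension you produce $w\succ 0$ on $W^c$ with $A_{W^cW^c}w\preceq(\rho^{\ast}+\epsilon)w$, the combined vector $(v^{\ast}|_W,\,w)$ need not satisfy the required inequality in the $W$-rows, because the off-diagonal block $A_{WW^c}$ is unconstrained and contributes $A_{WW^c}w$ there. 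Your sketch never mentions this term. The fix is easy---take $\delta<\epsilon$ to create slack $(\epsilon-\delta)(v^{\ast}|_W)\succ 0$, then rescale $w\mapsto tw$ with $t>0$ small enough (uniformly over $A\in S$, using compactness of the $Q_i$) so that $tA_{WW^c}w\preceq(\epsilon-\delta)(v^{\ast}|_W)$---but it needs to be said. A cleaner route that avoids the whole reducibility digression (and the vague ``$\delta\to 0$ along a subsequence on which the block structure stabilizes'') is to perturb by $\delta J$, the all-ones matrix, rather than $\delta I$: then every matrix in $S_\delta$ is strictly positive, the Brouwer fixed point $v^{\ast}$ is automatically strictly positive since $(T_\delta v)_i\ge\delta\sum_j v_j>0$ on the simplex, and by continuity of $\rho(\cdot)$ together with compactness of $S$ one has $\max_{A\in S}\rho(A+\delta J)\to\rho^{\ast}$ as $\delta\downarrow 0$, which is all the claim requires.
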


Furthermore, we will use the equivalence of Definition \ref{JointSpectraldefinition} of the joint spectral radius with the following representation from \cite{JSRAlternate}.

\begin{theorem}{\textbf{\emph{(Alternate Form of the Joint Spectral Radius)}}}\label{AlternateJointSpectral}
Given a set of matrices $S\subset \mathbb{R}^{n\times n}$, the joint spectral radius $\rho(S)$ is given by
\[
\rho(S) = \limsup_{k\rightarrow\infty} \max \{||A||^{\frac{1}{k}}: A\text{ is a product of length }k\text{ of matrices in }S\}
\]
\end{theorem}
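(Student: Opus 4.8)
The claimed identity is the classical equivalence between the ``growth-rate'' definition of the joint spectral radius (Definition \ref{JointSpectraldefinition}) and the Rota--Strang product-norm formula. Throughout take $S$ bounded (finite in every application in this paper), equip $\mathbb{R}^n$ with $\left\Vert\cdot\right\Vert_\infty$ and $\mathbb{R}^{n\times n}$ with the induced, hence submultiplicative, operator norm $\left\Vert\cdot\right\Vert$; since all norms on these finite-dimensional spaces are equivalent, neither side of the identity is affected by this choice. Write $\widehat{\rho}_k=\sup\{\left\Vert A\right\Vert : A \text{ is a product of } k \text{ matrices from } S\}$, so that the right-hand side of the statement equals $\limsup_k\widehat{\rho}_k^{1/k}$. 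A length-$(j+k)$ product factors as a length-$j$ product times a length-$k$ one, so $\widehat{\rho}_{j+k}\le\widehat{\rho}_j\widehat{\rho}_k$; hence, by Fekete's subadditivity lemma applied to $\log\widehat{\rho}_k$, the $\limsup$ is in fact a genuine limit and equals $\inf_k\widehat{\rho}_k^{1/k}$ -- denote this common value by $r$. The plan is then to prove the two inequalities $\overline{\rho}(S)\le r$ and $r\le\overline{\rho}(S)$.

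For $\overline{\rho}(S)\le r$: given $\varepsilon>0$, choose $k_0$ with $\widehat{\rho}_k\le (r+\varepsilon)^k$ for all $k\ge k_0$, and bound $\widehat{\rho}_k\le M^k$ for $k<k_0$, where $M=\max(1,\sup_{A\in S}\left\Vert A\right\Vert)$. Then for any sequence $\{A_i\}\subset S$ and any $\mathbf{z}^0$ one has $\left\Vert\mathbf{z}^k\right\Vert=\left\Vert A_k\cdots A_1\mathbf{z}^0\right\Vert\le\widehat{\rho}_k\left\Vert\mathbf{z}^0\right\Vert\le C_\varepsilon(r+\varepsilon)^k$, with $C_\varepsilon$ depending only on $\mathbf{z}^0,\varepsilon,k_0,M$ and not on the sequence. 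Hence $r+\varepsilon$ meets the condition defining $\overline{\rho}(S)$, and letting $\varepsilon\downarrow0$ gives the inequality.

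For $r\le\overline{\rho}(S)$: let $\overline{\rho}$ be any value admissible in Definition \ref{JointSpectraldefinition}, so there is a constant $C(\mathbf{z}^0)$ with $\left\Vert A_k\cdots A_1\mathbf{z}^0\right\Vert\le C(\mathbf{z}^0)\,\overline{\rho}^{\,k}$ for every sequence and every $k$. Applying this to each standard basis vector, every length-$k$ product $P$ satisfies $\left\Vert Pe_i\right\Vert\le C(e_i)\,\overline{\rho}^{\,k}$ for $1\le i\le n$. For any $v$ with $\left\Vert v\right\Vert_\infty\le1$, writing $v=\sum_i v_ie_i$ with $|v_i|\le1$ gives $\left\Vert Pv\right\Vert\le\sum_i|v_i|\,\left\Vert Pe_i\right\Vert\le B'\,\overline{\rho}^{\,k}$, where $B'=\sum_i C(e_i)$ is independent of $P$ and of $k$. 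Taking the supremum over such $v$ and over all length-$k$ products yields $\widehat{\rho}_k\le B'\,\overline{\rho}^{\,k}$, so $r=\lim_k\widehat{\rho}_k^{1/k}\le\lim_k (B')^{1/k}\overline{\rho}=\overline{\rho}$; taking the infimum over admissible $\overline{\rho}$ completes this direction, and with it the proof.

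The one delicate point is the third paragraph, which rests on reading Definition \ref{JointSpectraldefinition} in the natural way that the growth constant may depend on the initial vector $\mathbf{z}^0$ but not on the matrix sequence -- this uniformity is precisely what lets a single $P$-independent constant $B'$ emerge and thereby pin down $\widehat{\rho}_k$ up to a subexponential factor. If the definition were instead read with a sequence-dependent constant, this step would have to be replaced by a compactness argument -- a diagonal selection over the finitely branching tree of finite products -- that extracts one infinite sequence whose trajectory realizes the growth rate $r$; that is the fallback I would use, invoking finiteness (or boundedness) of $S$.
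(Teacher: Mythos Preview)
The paper does not actually prove this theorem: it is stated as a known result and attributed to Rota--Strang via the citation \cite{JSRAlternate}, then used only through its monotonicity consequence (that $S_1\subset S_2$ implies $\overline{\rho}(S_1)\le\overline{\rho}(S_2)$). So there is no ``paper's own proof'' to compare against; you have supplied what the authors deliberately outsourced to the literature.

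Your argument is the standard one and is correct. The Fekete step cleanly upgrades the $\limsup$ to a limit and to $\inf_k\widehat{\rho}_k^{1/k}$, the inequality $\overline{\rho}(S)\le r$ is immediate from the definition, and the basis-vector trick for $r\le\overline{\rho}(S)$ is exactly the right move under the reading that the constant in Definition~\ref{JointSpectraldefinition} is sequence-independent. You are also right to flag the ambiguity in that definition: as written in the paper, the sequence $\{A_i\}$ is introduced before the quantifier on $C$, so a literal parsing would allow $C$ to depend on the sequence as well. Your proposed fallback (a K\"onig-type diagonal extraction over the finitely branching product tree when $S$ is finite, yielding a single infinite sequence realizing growth rate $r$) is the correct remedy and would close the gap; since every use of this theorem in the paper is for finite $S$, that suffices here.
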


\noindent It follows immediately from Theorem \ref{AlternateJointSpectral} that if $S_1\subset S_2$, then $\rho(S_1)\le\rho(S_2)$. This allows us to give the following proof of Proposition \ref{RIradius}.

\begin{proof}
Let $(M,X)$ be a switched network with Lipschitz set $S$.
By the form of Theorem \ref{AlternateJointSpectral} we have $\overline{\rho}(S)\le \overline{\rho}(RI(S))$ since $S\subset RI(S)$.
For $1\le i\le n$, let $Q_i=\{\mathbf{a}_i|\text{ }A\in\overline{S}\}$ be the set of $i^{th}$ rows of all $A\in\overline{S}$.
Then $RI(S)$ may be expressed as
\[
S = \{(\mathbf{a}_1,\mathbf{a}_2,\hdots,\mathbf{a}_n)^T \text{ }| \text{ }\mathbf{a}_i \in Q_i, 1\le i\le n\}
\]
implying $RI(S)$ is row-independent.
Thus
\[\overline{\rho}(S)\le \overline{\rho}(RI(S)) = \max_{A\in RI(S)}\rho(A)
\]
by Theorem \ref{JSRindpependent}, as desired.
\end{proof}

\begin{lemma}{\textbf{\emph{(Equality of $A_D$'s)}}}\label{ADs}
Let $L>0$ and $1\le i\le n$ be given.
Let the matrices $A^{(1)}$, $A^{(2)}$ satisfy $\mathbf{a}^{(1)}_i=\mathbf{a}^{(2)}_i$, and the matrices $D^{(1)}$, $D^{(2)}$ satisfy $\mathbf{d}^{(1)}_i=\mathbf{d}^{(2)}_i$.
Then $(A^{(1)}_{D^{(1)}})_i=(A^{(2)}_{D^{(2)}})_i$.
\end{lemma}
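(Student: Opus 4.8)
The plan is to read off the $i$-th row of $A_D$ directly from the block form established in Lemma \ref{structure} and observe that, for $1\le i\le n$, this row depends on $A$ and $D$ only through their respective $i$-th rows. Recall from Lemma \ref{structure} that the first $n$ rows of $A_D$ form the horizontal concatenation $[A_0\mid A_1\mid\cdots\mid A_L]$ with $(A_\ell)_{ij}=a_{ij}1_{d_{ij}=\ell}$, while the remaining $nL$ rows consist solely of shifted identity blocks $I_n$ and zero blocks, and are therefore independent of both $A$ and $D$. Since $1\le i\le n$, the row $(A_D)_i$ lies in this first block.

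First I would fix $i$ with $1\le i\le n$ and write out $(A_D)_i$ explicitly, using the ordering of the component spaces of $X_L$ fixed in Definition \ref{ConstantDelayNetwork}: its entry in the column indexed by $(j,\ell)$, for $1\le j\le n$ and $0\le\ell\le L$, equals $a_{ij}1_{d_{ij}=\ell}$. This expression involves only the numbers $a_{i1},\dots,a_{in}$, which constitute the $i$-th row $\mathbf{a}_i$ of $A$, and $d_{i1},\dots,d_{in}$, which constitute the $i$-th row $\mathbf{d}_i$ of $D$; no other entry of $A$ or of $D$ enters.

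Then, applying this formula to the two pairs $(A^{(1)},D^{(1)})$ and $(A^{(2)},D^{(2)})$, the hypotheses $\mathbf{a}^{(1)}_i=\mathbf{a}^{(2)}_i$ and $\mathbf{d}^{(1)}_i=\mathbf{d}^{(2)}_i$ give $a^{(1)}_{ij}=a^{(2)}_{ij}$ and $d^{(1)}_{ij}=d^{(2)}_{ij}$ for every $j$, whence $a^{(1)}_{ij}1_{d^{(1)}_{ij}=\ell}=a^{(2)}_{ij}1_{d^{(2)}_{ij}=\ell}$ for all $j$ and all $\ell$. Componentwise equality of the two rows over every column $(j,\ell)$ yields $(A^{(1)}_{D^{(1)}})_i=(A^{(2)}_{D^{(2)}})_i$, as claimed. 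I do not expect any real obstacle here; the only points requiring care are keeping the lifted-space column index $(j,\ell)$ consistent with the ordering used in Lemma \ref{structure}, and using the hypothesis $i\le n$ to ensure the row in question sits in the ``active'' top block $[A_0\mid\cdots\mid A_L]$ rather than among the identity-shift rows (for which the statement would be trivial anyway, those rows being constant in $A$ and $D$).
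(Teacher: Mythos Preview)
Your proposal is correct and follows essentially the same approach as the paper: both arguments invoke Lemma~\ref{structure} to identify the $(j,\ell)$-entry of the $i$-th row of $A_D$ as $a_{ij}1_{d_{ij}=\ell}$, observe that this depends only on $\mathbf{a}_i$ and $\mathbf{d}_i$, and then apply the hypotheses. Your write-up is slightly more explicit about the column indexing and about why $1\le i\le n$ places the row in the top block, but the underlying reasoning is the same.
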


\begin{proof}
It suffices to show that $(A^{(1)}_\ell)_i=(A^{(2)}_\ell)_i$, where given some $A$ and $D$, $A_\ell$ is defined as in Lemma \ref{structure}.
Let $0\le\ell\le L$ be arbitrary.
Then by Lemma \ref{structure},
\[
(A^{(1)}_\ell)_{ij}=\mathbf{a}^{(1)}_{ij}1_{d^{(1)}_{ij}=\ell}=\mathbf{a}^{(2)}_{ij}1_{d^{(2)}_{ij}=\ell}=(A^{(2)}_\ell)_{ij}\quad\text{for}\quad 1\le j\le n.
\]
Thus $(A^{(1)}_\ell)_i=(A^{(2)}_\ell)_i$, so by Lemma \ref{structure}, $(A^{(1)}_{D^{(1)}})_i=(A^{(2)}_{D^{(2)}})_i$.
\end{proof}

\begin{lemma}{\textbf{\emph{(Equality of sets)}}}\label{DelayThenRI}
Let $(M_0,X)$ be a switched network with Lipschitz set $S$.
Let $L>0$ and $\mathbb{D}=\{D\in\mathbb{N}^{n\times n}|\text{ }\max_{ij}d_{ij}\le L\}$.
Then $RI(\{A_D|\text{ }A\in S,D\in\mathbb{D}\})=\{A_D|\text{ }A\in RI(S),D\in\mathbb{D}\}$.
\end{lemma}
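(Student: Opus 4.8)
The plan is to prove the two set inclusions separately. Both sides are subsets of $\mathbb{R}^{n(L+1)\times n(L+1)}$, and a natural matrix in either set is determined row-by-row, so the key technical tool is Lemma \ref{ADs}: the $i^{th}$ row of $A_D$ depends only on the $i^{th}$ row of $A$ and the $i^{th}$ row of $D$. With this in hand, both inclusions reduce to bookkeeping about which rows can be assembled independently.

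For the inclusion $RI(\{A_D \mid A\in S, D\in\mathbb{D}\}) \subseteq \{A_D \mid A\in RI(S), D\in\mathbb{D}\}$, I would take an arbitrary $B^* \in RI(\{A_D \mid A\in S, D\in\mathbb{D}\})$. By Definition \ref{RowIndependent} there are matrices $B^{(1)},\dots,B^{(n(L+1))}$ in the set $\{A_D \mid A\in S, D\in\mathbb{D}\}$ such that $\mathbf{b}^*_i = \mathbf{b}^{(i)}_i$ for each row index $i$. Each $B^{(i)}$ equals $A^{(i)}_{D^{(i)}}$ for some $A^{(i)}\in S$ and $D^{(i)}\in\mathbb{D}$. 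Now I must build a single $A^*\in RI(S)$ and a single $D^*\in\mathbb{D}$ with $A^*_{D^*}=B^*$. For the first $n$ rows ($1\le i\le n$), set $\mathbf{a}^*_i = \mathbf{a}^{(i)}_i$ (the $i^{th}$ row of the $i^{th}$ chosen $A^{(i)}$) and $\mathbf{d}^*_i = \mathbf{d}^{(i)}_i$; then $A^*$ so defined lies in $RI(S)$ by construction, $D^*$ lies in $\mathbb{D}$ since $\max_{ij}d^{(i)}_{ij}\le L$, and Lemma \ref{ADs} gives $(A^*_{D^*})_i = (A^{(i)}_{D^{(i)}})_i = \mathbf{b}^{(i)}_i = \mathbf{b}^*_i$. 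The remaining rows of $A^*_{D^*}$ (indices $i > n$) are forced to be the fixed block-identity rows regardless of $A^*$ and $D^*$, and the same fixed rows appear in every $B^{(i)}$ by Lemma \ref{structure}, hence in $B^*$; so they match automatically. Thus $B^* = A^*_{D^*}$ with $A^*\in RI(S)$, $D^*\in\mathbb{D}$.

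For the reverse inclusion $\{A_D \mid A\in RI(S), D\in\mathbb{D}\} \subseteq RI(\{A_D \mid A\in S, D\in\mathbb{D}\})$, take $A\in RI(S)$ and $D\in\mathbb{D}$; I want to exhibit $A_D$ as a row-independent combination of matrices of the form $A'_{D'}$ with $A'\in S$. Since $A\in RI(S)$, for each $1\le i\le n$ there is $A^{(i)}\in S$ with $\mathbf{a}_i = \mathbf{a}^{(i)}_i$. For such $i$ define $D^{(i)}\in\mathbb{D}$ to be any delay matrix whose $i^{th}$ row equals $\mathbf{d}_i$ (e.g. take $D^{(i)}$ with all rows equal to $\mathbf{d}_i$, which still satisfies the bound $L$). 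Then by Lemma \ref{ADs}, $(A^{(i)}_{D^{(i)}})_i = (A_D)_i$ for $1\le i\le n$. For the forced rows $i>n$ of $A_D$, pick any single matrix $\hat A\in S$ and any $\hat D\in\mathbb{D}$; the matrix $\hat A_{\hat D}$ agrees with $A_D$ on all rows $i>n$ since those are the universal block-identity rows of Lemma \ref{structure}. Assembling $A_D$ row-by-row from the $n(L+1)$ matrices $A^{(1)}_{D^{(1)}},\dots,A^{(n)}_{D^{(n)}},\hat A_{\hat D},\dots,\hat A_{\hat D}$ (using $\hat A_{\hat D}$ for every row $i>n$) realizes $A_D$ as an element of $RI(\{A_D \mid A\in S, D\in\mathbb{D}\})$.

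The main obstacle — really the only subtlety — is handling the structural rows of $A_D$ with index $i>n$ correctly: these rows are not "free" in the row-independence sense, since $RI(\cdot)$ of a set $\mathcal{S}$ of $m\times m$ matrices still requires choosing $m$ matrices, one per row. The point to get right is that, because Lemma \ref{structure} makes these rows identical across \emph{all} matrices $A'_{D'}$ in the relevant set, any row-independent assembly automatically produces the correct values there, so no compatibility condition is lost. Once that observation is isolated, both inclusions are immediate from Lemma \ref{ADs}, and the proposed proof is essentially a careful indexing argument with no real computation.
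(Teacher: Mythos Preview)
Your proof is correct and follows essentially the same approach as the paper: prove both inclusions separately, using Lemma \ref{ADs} to match the first $n$ rows via row-by-row construction of $A^*$ and $D^*$. You are in fact slightly more careful than the paper in explicitly handling the structural rows $i>n$ (which the paper leaves implicit), and in the reverse inclusion you could simplify by taking $D^{(i)}=D$ throughout rather than building a new delay matrix, as the paper does.
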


\begin{proof}
Let $(A_D)^*\in RI(\{A_D|\text{ }A\in S,D\in\mathbb{D}\})$.
Then there exist $(A_D)^{(1)},\hdots,(A_D)^{(n)}\in\{A_D|\text{ }A\in S,D\in\mathbb{D}\}$ such that $(A_D)^*_i=((A_D)^{(i)})_i$.
Furthermore, there exist $A^{(1)},\hdots,A^{(n)}\in S$ and $D^{(1)},\hdots,D^{(n)}\in \mathbb{D}$ such that $(A_D)^{(i)}=A^{(i)}_{D^{(i)}}$.
Let $A^*$ be constructed as $(A^*)_i=\mathbf{a}^{(i)}_i$, and $D^*$ be constructed as $(D^*)_i=\mathbf{d}^{(i)}_i$.
Then $A^*\in RI(S)$, and since each $D^{(i)}$ satisfies $d_{ij}\le L$, we have $D^*\in\mathbb{D}$.
Thus,
\[
(A_D)^*_i=((A_D)^{(i)})_i=(A^{(i)}_{D^{(i)}})_i=(A^*_{D^*})_i\quad\text{for}\quad 1\le i\le n,
\]
where the last equality follows from Lemma \ref{ADs}.
Therefore, $(A_D)^*=A^*_{D^*}\in\{A_D|\text{ }A\in RI(S),D\in\mathbb{D}\}$, so
\[
RI(\{A_D|\text{ }A\in S,D\in\mathbb{D}\})\subset\{A_D|\text{ }A\in RI(S),D\in\mathbb{D}\}.
\]

Now let $A_D\in\{A_D|\text{ }A\in RI(S),D\in\mathbb{D}\}$.
Then there exist $A^{(1)},\hdots,A^{(n)}\in S$ and $D\in\mathbb{D}$ such that $(A_D)_i=((A^{(i)})_D)_i$.
Let $(A_D)^{(i)}=(A^{(i)})_D$.
Then $(A_D)^{(i)}\in\{A_D|\text{ }A\in S,D\in\mathbb{D}\}$ so $A_D\in RI(\{A_D|\text{ }A\in S,D\in\mathbb{D}\})$.
Hence,
\[
\{A_D|\text{ }A\in RI(S),D\in\mathbb{D}\}\subset RI(\{A_D|\text{ }A\in S,D\in\mathbb{D}\})
\]
completing the proof.
\end{proof}

We now give the proof of Main Result \ref{RIIntrinsic} found in Section \ref{sec:5}. Following this we show that Main Result \ref{TimeVaryingIntrinsic} is a corollary of this result.

\begin{proof}
Let $(M_0,X)$ be a switched network with Lipschitz set $S$.
Assume $\mathbf{x}^*$ is a shared fixed point of $(F,X)$ for all $F\in M_0$ and $\rho(A)<1$ for all $A\in RI(S)$.
Let $L>0$, $\mathbb{D}=\{D\in\mathbb{N}^{n\times n}|\text{ }\max_{ij}d_{ij}\le L\}$, $M_d=\{F_D|F\in M_0,D\in\mathbb{D}\}$, and let $S_d$ be the Lipschitz set of $M_d$.
We will show $\rho(A_D)<1$ for all $A_D\in RI(S_d)$ and invoke Proposition \ref{RIradius}.

By Lemma \ref{DelayThenRI}, we have $RI(S_d)=\{A_D|\text{ }A\in RI(S),D\in\mathbb{D}\}$.
Then
\[
\max_{A_D\in RI(S_d)}\rho(A_D)=\max_{A\in RI(S)}\max_{D\in\mathbb{D}}\rho(A_D)<1
\]
by the hypothesis and Theorem \ref{bunwebbtheorem}.
Now given some $A\in RI(S)$, by Lemma \ref{structure} and repeated application of Theorem \ref{bunwebbtheorem} we have that
\[
\max_{D\in\mathbb{D}}\rho(A_D)=\rho(A_L)\quad\text{where}\quad
A_L = \begin{bmatrix}
\mathbf{0}_{n\times nL} & A \\
\mathbf{I}_{nL\times nL} & \mathbf{0}_{n\times n}
\end{bmatrix}.
\]
Thus by Proposition \ref{RIradius},
\[
\overline{\rho}(S_d)\le\overline{\rho}(RI(S_d))=\max_{A_D\in RI(S_d)}\rho(A_D)=\max_{A\in RI(S)}\rho(A_L)<1.
\]

Since $\mathbf{x}^*$ is a shared fixed point of $(F,X)$ for all $F\in M_0$, by Proposition \ref{FixedPoint} $E_L(\mathbf{x}^*)$ is a shared fixed point of $(F_D,X_L)$ for all $F_D\in M_d$.
Thus by Main Result \ref{LimitOrbits}, $E_L(\mathbf{x}^*)$ is a globally attracting fixed point of every instance $(\{F^{(k)}_{D^{(k)}}\}_{k=1}^\infty,X_L)$ of $(M_d,X_L)$.
\end{proof}

Lastly, we give a proof of Main Result \ref{TimeVaryingIntrinsic}.

\begin{proof}
Let $M_0$ be the singleton set consisting of $F$.
Then the Lipschitz set $S$ of $M_0$ consists only of the matrix $A$, and so trivially satisfies $S=RI(S)$.
Thus the hypothesis of Main Result \ref{RIIntrinsic} is trivially satisfied, and the result follows.
\end{proof}

%% Choose your bibliography style. Some options: plain, unsrt, abbrev etc.
\bibliographystyle{unsrt}
%% the name of your bib file without the .bib extension. For example, if your file is thesis.bib, you would put \bibliography{thesis}
% \bibliography{thesis}

\section{References}

\end{document}